\numberwithin{equation}{section}
\newtheorem{theorem}{Theorem}[section]
\newtheorem{lemma}{Lemma}[section]
\newtheorem{corollary}{Corollary}[section]
\newtheorem{remark}{Remark}[section]
\newtheorem{definition}{Definition}[section]
\newtheorem{hypothesis}{Hypothesis}
\theoremstyle{definition}
\begin{document}
\bibliographystyle{amsplain}
\title{{{
Properties of relaxed trajectories of non-linear fractional impulsive control systems
}}}
\author{Divya Raghavan
}
\address{
Department of Mathematics,
Indian Institute of Technology, Roorkee-247667,
Uttarakhand, India
}
\email{divyar@iitr.ac.in, madhanginathan@gmail.com}
\author{
N. Sukavanam
}
\address{
Department of  Mathematics  \\
Indian Institute of Technology, Roorkee-247 667,
Uttarkhand,  India
}
\email{nsukvfma@iitr.ac.in}
\bigskip
\begin{abstract}
A non-convex control system governed by a nonlinear impulsive evolution equation of Hilfer fractional order in a Banach space is considered.
Existence of admissible state-control pair is established. Then the introduction of suitable measure-valued control, convexifies the system and
the relaxed system is obtained. Further, relaxation theorem for the described class are proved along with the existence of optimal relaxed control.
\end{abstract}
\subjclass[2020]{37L05,49J45,26A33,49N25}
\keywords{ Evolution equation, Relaxation, Fractional calculus, Impulsive system, Hilfer fractional derivative}
\maketitle
\pagestyle{myheadings}
\markboth
{Divya Raghavan and N. Sukavanam}
{Relaxed trajectories of nonconvex nonlinear Hilfer fractional impulsive control system }
\section{Introduction}
 The study of optimal control deals with the problem of finding a control law for a given dynamical system that minimizes the performance index of
the state and the control variables. Various existence theories for the
optimal control problem emerged over the years. The existence theory given by Lee and Markus \cite{existence-1}, Roxin \cite{existence-2},
Cesari \cite{property-Q} (commonly known as Cesari property or property Q) relay on the convexity and compactness hypothesis conditions. With the
motivation that all physical problems cannot meet the convexity constraints, Neustadt\cite{relax-2} provided existing results for non-convex
linear systems using the relaxed system suggested by Warga \cite{relax-1}. The term `relaxed' referred to the enlargement of the domain
of a variational problem. Down the line, many authors studied the relaxed control system in finite and infinite-dimensional spaces.
Ahmed  \cite{property-1}, and Papageorgiou  \cite{property-2} discussed the properties of relaxed trajectories of Evolution
equations, validating that the feasible solution (trajectory) of the original control system is dense in the relaxed system. Xiang \textit{et al.}
\cite {delay} studied relaxed controls for delay evolution system. The recent article by Papageorgiou \textit{et al.}\cite{recent} explains two relaxation
methods, one called the reduction method and another method using Young measures. Likewise, many authors started working in the relaxed control, where
the relaxed minimizing curve is determined and approximated to the solution of the differential equation. In these problems, the set of permissible
velocities is replaced by its convex hull.

The study of relaxed optimal control problems in fractional order also attracted many researchers as it finds application in practical problems such as
diffusion process, stochastic processes, finance, game theory, and fluid dynamics. The work of  Liu \textit{et al.}\cite{frac-semi} on relaxation in
fractional semilinear evolution system,  Debbouche and  Nieto \cite{frac-nonlocal} on relaxation in fractional nonlocal integrodifferential
equations, Liu \textit{et al.}\cite{frac-nonconvex} on relaxation of mixed nonconvex constraints,  Debbouche  \textit{et al.}\cite{frac-multiple} on relaxation in
fractional Sobolev-type multiple control systems are some of the research articles for the interested readers.

While studying the traditional classical differential equations, the researchers face challenges when certain moments change their state
rapidly. In such cases, the solutions have a jump, and the endpoints of each short interval are the impulsive points. Since many physical problems
have impulse perturbation, the impulsive fractional differential system
received much attention.

The study of optimal control deals with the problem of finding a control law for a given dynamical system that minimizes the performance index of
the state and the control variables. Various existence theories for the
optimal control problem emerged over the years. The existence theory given by Lee and Markus \cite{existence-1}, Roxin \cite{existence-2},
Cesari \cite{property-Q} (commonly known as Cesari property or property Q) relay on the convexity and compactness hypothesis conditions. With the
motivation that all physical problems cannot meet the convexity constraints, Neustadt\cite{relax-2} provided existing results for non-convex
linear systems using the relaxed system suggested by Warga \cite{relax-1}. The term `relaxed' referred to the enlargement of the domain
of a variational problem. Down the line, many authors studied the relaxed control system in finite and infinite-dimensional spaces.
Ahmed  \cite{property-1}, and Papageorgiou  \cite{property-2} discussed the properties of relaxed trajectories of Evolution
equations, validating that the feasible solution (trajectory) of the original control system is dense in the relaxed system. Xiang \textit{et al.}
\cite {delay} studied relaxed controls for delay evolution system. The recent article by Papageorgiou \textit{et al.}\cite{recent} explains two relaxation
methods, one called the reduction method and another method using Young measures. Likewise, many authors started working in the relaxed control, where
the relaxed minimizing curve is determined and approximated to the solution of the differential equation. In these problems, the set of permissible
velocities is replaced by its convex hull.

The study of relaxed optimal control problems in fractional order also attracted many researchers as it finds application in practical problems such as
diffusion process, stochastic processes, finance, game theory, and fluid dynamics. The work of  Liu \textit{et al.}\cite{frac-semi} on relaxation in
fractional semilinear evolution system,  Debbouche and  Nieto \cite{frac-nonlocal} on relaxation in fractional nonlocal integrodifferential
equations, Liu \textit{et al.}\cite{frac-nonconvex} on relaxation of mixed nonconvex constraints,  Debbouche  \textit{et al.}\cite{frac-multiple} on relaxation in
fractional Sobolev-type multiple control systems are some of the research articles for the interested readers.

While studying the traditional classical differential equations, the researchers face challenges when certain moments change their state
rapidly. In such cases, the solutions have a jump, and the endpoints of each short interval are the impulsive points. Since many physical problems
have impulse perturbation, the impulsive fractional differential system
received much attention.

Regarding the type of fractional order, Hilfer \cite{Hilfer-glass} generalized Riemann-Liouville operator, later called Hilfer
derivative. The Hilfer fractional derivative operator is a two-parameter family of operators, denoted by $D^{\mu,\nu}_{0+}$, where  $\mu$ is called the order parameter, and $\nu$ is called the type parameter, enables one to semblance and unification between the Riemann-Liouville and the Caputo derivative. Furati \textit{et al.} \cite{Hilfer-exist-1}, and Gu and Trujillo \cite{Hilfer-remark}  proved the existence and uniqueness of an initial value nonlinear fractional differential equation involving Hilfer fractional derivative,
\begin{align*}
\left\{
  \begin{array}{ll}
     D_{0+}^{\mu,\nu}x(t)=g(t,x(t)),\enspace t \in J=[0,T]\\
    I_{0+}^{(1-\mu)(1-\nu)}x(0)=x_{0},
      \end{array}
\right.
\end{align*}
where $D_{0+}^{\mu,\nu}$ denotes the Hilfer fractional derivative of order $\mu (0<\mu<1)$, type $\nu(0\leq\nu\leq1)$.  As the Hilfer
fractional derivative is a generalization of the two classical fractional derivatives Caputo and Riemann; its two-fold index is
captivating even though it is strenuous, it is worth studying the relaxed optimal conditions of such a fractional system. In this context, it is to be mentioned that the work of  Sousa \cite{Hilfer-non-instant, Hilfer-Sousa-2, Hilfer-Sousa-3} in studying the behaviour of solutions of Hilfer and $\psi$-Hilfer derivative. Even though, in the recent
past there are results, see, for example, the work of Harrat \textit{et al.} \cite{Hilfer-Impulsive-optimal} emerging on the study of solvability conditions and
optimal control for Hilfer fractional system, the study of relaxed optimal control system with Hilfer fractional derivative has not been studied.

This paper is organized as follows. In Section 2, the Hilfer fractional impulsive evolution system is addressed. This system
which is not necessarily convex and is considered as the original system. This original system is then modified to a relaxed system, in which measure-valued control is introduced to convexify the original system. After formulating both the systems, in Section 3, the basic theory regarding fractional calculus is provided in detail. In Section 4, the existence of a mild solution for the original system with some mild assumptions is discussed. Section 4 is followed by Section 5 with the explanation regarding identifying suitable relaxed control space with some basic duality theory. Three subsections that answers the existence of the solution for a relaxed system, bounds of the trajectories, approximation of trajectory of the relaxed system with the original system, existence of optimal trajectory of the relaxed system and the merger of the extremals of both the system are deliberated in this section. Finally, in Section 6, an example is provided to ascertain the validity of the developed theory.

\section{Problem formulation}

To begin with, the impulsive control system of Hilfer fractional order given below can be viewed as an original system $(P_{o})$:
\begin{align}
\label{eqn:Relax-Hilfer fractional differential equation}
\left\{
  \begin{array}{ll}
    D_{0+}^{\mu,\nu}x(t)= A x(t)+g(t,x(t),u(t)), \enspace t\in [0,T],\enspace t\neq t_{k},\\
    I_{0+}^{(1-\lambda)}[x(t)]_{t=0}= x_{0},\\
    \Delta I_{t_{k}}^{(1-\lambda)}x(t_{k})=\phi_{k}(t_{k}^{-},x(t_{k}^{-})), \enspace \enspace k=1,2,\ldots n,
  \end{array}
\right.
\end{align}

\noindent where $D_{0}^{\mu,\nu}$ denotes the Hilfer fractional derivative of order $0<\mu<1$, type $0\leq \nu \leq1$ and
$\lambda=\mu+\nu-\mu \nu$. $A:D(A)\subseteq E\rightarrow E$  is the infinitesimal generator of a compact $C_{0}$-semigroup $Q(t)(t\geq 0)$ on a
Banach space $E$. If the impulse effect occurs at $t=t_{k}$,
for $(k=1,2,\ldots,n)$, then $\phi_{k}:[0,T]\times E \rightarrow E$ is the mapping of the solution before the impulse effect, $x(t_{k}^{-})$, to after
the impulse effect, $x(t_{k}^{+})$. It determines the size of the jump at time $t_{k}$. In other words, the impulsive moments meet the
relation $\Delta I_{t_{k}}^{1-\lambda}x(t_{k})=I_{t_{k}^{+}}^{1-\lambda}x(t_{k}^{+})-I_{t_{k}^{-}}^{1-\lambda}x(t_{k}^{-})$,
where $I_{t_{k}^{+}}^{1-\lambda}x(t_{k}^{+})$ and $I_{t_{k}^{-}}^{1-\lambda}x(t_{k}^{-})$ denotes the right and the left limit of
$I_{t_{k}}^{1-\lambda}x(t)$ at $t=t_{k}$ with  $0=t_{0}<t_{1}\ldots<t_{n}<t_{n+1}=T$. In the given impulsive system,
$g$ is a  continuous nonlinear operator  from $E$ to $E$. The state $x(\cdot)$ takes values in the Banach space $E$, and the control function $u$ is taken from a suitable
admissible control set $U_{o}$, where $$U_{o}=\{u:[0,T]\rightarrow \Lambda : u \enspace \mbox{is strongly measurable}\}.$$ Here,  $\Lambda$
is a Polish space.

As the ultimate aim of this work is to show that the admissible control space is dense in the relaxed space, the admissible
space must be taken from a separable space. Hence, Polish space (separable complete metric space) is considered. Also, Polish
space is always preferred when the measure-valued functions, especially probability measures, are included. For more details on Polish space,
the reader may refer to \cite{Polish}.

At first, the existence of the state-control pair of the system $P_{o}$ is proved. Let the cost functional of the
original system \eqref{eqn:Relax-Hilfer fractional differential equation} over the family of admissible state-control pair $(x,u)$ is given by
\begin{align*}
\mathcal{J}(u):=\int_{0}^{T}\mathcal{L}(t,x(t),u(t))dt.
\end{align*}
Further, $u_{o}\in U_{o}$ which
set forth a minimum to the cost functional, such that
\begin{align}
\label{eqn:Relax-mini-ori}
\mathcal{J}(u_{o}):=\displaystyle\inf_{(x(\cdot),u(\cdot))}\{\mathcal{J}(u),u\in U_{o}\}=m_{o}
\end{align}
is identified.

If the original system \eqref{eqn:Relax-Hilfer fractional differential equation} lacks convex control constraints, a relaxed system $P_{r}$ with convexified
constraints is proposed. For the compact Polish space $\Lambda$, Let
 $\mathcal{M}(\Lambda)$ denote the set of all probability measure in $(\Lambda)$.
The set of all measurable $\mathcal{M}(\Lambda)$-valued functions on $[0,T]$ is defined as the relaxed control space $\mathcal{R}(J,\mathcal{M}(\Lambda))$.
Thereupon the relaxed control
system $P_{r}$ leads in finding a control $v_{r} \in U_{r}=\mathcal{R}([0,T],\mathcal{M}(\Lambda))$ such that
\begin{align}
\label{eqn:Relax-mini-relax}
\mathcal{J}(v_{r}):=\displaystyle\inf_{(x(\cdot),v(\cdot))}\{\mathcal{J}(v), v\in U_{r}\}=m_{r},
\end{align}
 where
\begin{align*}
\mathcal{J}(v):=\int_{0}^{T}dt \int_{\Lambda}\mathcal{L}(t,x(t),\eta(t))v(t)d\eta,\enspace \eta \in \Lambda.
\end{align*}
Here, the relaxed state-control pair $(x,v)$ is the solution of the following Hilfer fractional impulsive relaxed control system, with the notations same
as in \eqref{eqn:Relax-Hilfer fractional differential equation}
\begin{align}
\label{eqn:Relax-relaxed sys}
\left\{
  \begin{array}{ll}
    D_{0+}^{\mu,\nu}x(t)= A x(t)+\int_{\Lambda}g(t,x(t),\eta(t))v(t)d\eta, \enspace t\in [0,T],\enspace t\neq t_{k},\\
    I_{0+}^{(1-\lambda)}[x(t)]_{t=0}= x_{0},\\
    \Delta I_{t_{k}}^{(1-\lambda)}x(t_{k})=\phi_{k}(t_{k}^{-},x(t_{k}^{-})) \enspace k=1,\ldots n,\enspace 0=t_{0}<\ldots<t_{n+1}=T
  \end{array}
\right.
\end{align}
Further, the properties of trajectories, relation between the optimal control problem $(P_{o})$ and $(P_{r})$ are analyzed.
The same type of problem for the nonlinear integer-valued and for a fractional system with Caputo order derivative was discussed by
Pongchalee \textit{et al.}\cite{base-1} and Wang \cite{base-2} respectively. In this work, the case where the fractional derivative is Hilfer is being focussed, with an appropriate weighted norm on the Banach space $PC_{1-\lambda}$, the definition of which is given below.

\section{Key aspects and Basic definitions}
 \vspace{3mm}
This section is allotted to put forth some basic definitions and preliminaries relevant for further results.

The norm of a Banach space $E$ will be denoted by $\|.\|_{E}$. Let $L_{b}(E)$ denote the space of all bounded linear operator on $E$.
The bound for the uniformly bounded $C_{0}$-semigroup $Q(t)(t\geq 0)$ be set as $\displaystyle M:=\sup_{t\in [0,\infty)}\|Q(t)\|_{L_{b}(E)}<\infty$.
The Banach space of all $E$-valued continuous functions from $J=[0,T]$ into $E$ be taken as $C([0,T],E)$ with the norm
 $\displaystyle\|x\|_{C}=\sup_{t\in J}\|x(t)\|_{E}$.
Let $C_{1-\lambda}([0,T],E)=\{x:t^{1-\lambda}\|x(t)\|_{E}:t\in J\}$ be defined with the norm
 $\|x\|_{C_{1-\lambda}}=\sup \{t^{1-\lambda}\|x(t)\|_{E}:t \in J\}$,
which is a Banach space. The space $L_{p}(J,\mathbb{R}^{+})$ is equipped with the usual standard $p$-norm.
The piecewise weighted space of continuous function is defined as
\begin{align*}
PC_{1-\lambda}([0,T],E)=\{x:(t-t_{k})^{1-\lambda}x(t)\in C(t_{k},t_{k+1}],\enspace 0<\lambda\leq1\},
\end{align*}
 with the norm
 {\small{
\begin{align}
\label{eqn:Relax-PC condition}
\|x\|_{PC_{1-\lambda}}=\mbox{max}\left\{\sup_{t\in (t_{k},t_{k+1}]}(t-t_{k})^{1-\lambda} \|x(t)\|_{E}, \enspace k=1,2,\ldots,n ,
\enspace 0<\lambda\leq 1\right\}.
\end{align}}}
In the rest of the paper, the functions
are exhibited with appropriate norms at the end of each set of calculation.

The following are the basic fractional calculus definitions. For detailed study \cite{Podlubny-book} may be referred.
\begin{definition}\rm{\cite{Podlubny-book}}
The integral
\begin{align*}
I^{\mu}_{t}g(t)=\dfrac{1}{\Gamma(\mu)}\int^{t}_{0}(t-s)^{\mu-1}g(s)ds, \enspace \enspace \mu> 0,
\end{align*}
is called the Riemann-Liouville fractional integral of order $\mu$, where $\Gamma(\cdot)$ is the well known gamma function.
\end{definition}

\begin{definition}\rm{\cite{Podlubny-book}}
The Riemann-Liouville derivative of order $\mu >0$ for a function $g:[0,\infty)\rightarrow\mathbb{R}$ can be defined by
\begin{align*}
^{RL}D^{\mu}_{0+}g(t)=\dfrac{1}{\Gamma(n-\mu)}\left(\dfrac{d}{dt}\right)^{n}\int^{t}_{0}(t-s)^{n-\mu-1}g(s)ds,\enspace t>0,\enspace n-1\leq \mu <n,
\end{align*}
where $n=[\mu]+1$ and $[\mu]$ denotes the integral part of number $\mu$, is called the Riemann-Liouville derivative of order $\mu$.
\end{definition}

\begin{definition}\rm{\cite{Podlubny-book}}
The Caputo derivative for a function $g:[0,\infty)\rightarrow\mathbb{R}$ of order $\mu >0$  can be defined by
\begin{align*}
^{C}D^{\mu}_{0+}g(t)=\dfrac{1}{\Gamma(n-\mu)}\int^{t}_{0}(t-s)^{n-\mu-1}g(s)ds, \enspace t>0,\enspace n-1\leq \mu <n,
\end{align*}
where $\Gamma(\cdot)$ is the Gamma function.
\end{definition}

\begin{definition}\rm{\cite{Hilfer-glass}}
The Hilfer fractional derivative of order $0< \mu <1$ and type $0\leq \nu \leq 1$ of function $g(t)$ is defined by
\begin{align*}
D^{\mu,\nu}_{0+}g(t)=I_{0+}^{\nu(1-\mu)}DI_{0+}^{(1-\nu)(1-\mu)}
\end{align*}
where $D:=\dfrac{d}{dt}$.
\end{definition}

\begin{remark}\rm{\cite{Hilfer-remark}}
\begin{enumerate}[\rm(i)]
\item
The Hilfer fractional derivative $D^{\mu,\nu}_{0+}$  is considered as an merger between the Riemann-Liouville $^{L}D^{\mu}_{0+}$
and the Caputo derivative $^{C}D^{\mu}_{0+}$, since
\begin{align*}
D_{0+}^{\mu,\nu}=
\left\{
  \begin{array}{ll}
   DI_{0+}^{1-\mu}={ }^{RL}D_{0+}^{\mu},\enspace \nu=0\\
   I_{0+}^{1-\mu}D={ }^{C}D^{\mu}_{0+},\enspace \nu=1
  \end{array}
\right.
\end{align*}
that is, when $\nu=0$, the Hilfer corresponds to the classical Riemann-Liouville fractional derivative and when $\nu=1$, the Hilfer fractional
derivative corresponds to the classical Caputo derivative.

\item
The parameter $\lambda$ satisfies\\
$\lambda=\mu+\nu-\mu \nu, \enspace 0<\lambda\leq 1, \enspace \lambda\geq\mu, \enspace \lambda>\nu$.
\end{enumerate}
\end{remark}

The below stated lemma which relates the continuous and measurable functions is extensively used in the subsequent sections.
\begin{lemma}\rm{\cite{relax-ref-book}}
Let $E$ be a compact metric space and $g:[0,T]\times E\rightarrow \mathbb{R}$ a function such that
\begin{enumerate}[\rm(a)]
\item $u\rightarrow g(t,u)$ is continuous in $u$ for each $t$ fixed.
\item $t\rightarrow g(t,u)$ is measurable in $t$ for each $u$ fixed.
\end{enumerate}
Then the function $t\rightarrow g(t,\cdot)$ is a strongly measurable $C(E)$-valued function.
\end{lemma}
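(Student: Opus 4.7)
The plan is to exploit the fact that $C(E)$ is a separable Banach space (since $E$ is compact metric) and to approximate $t\mapsto g(t,\cdot)$ in the sup-norm, pointwise in $t$, by a sequence of strongly measurable simple-type $C(E)$-valued functions. Concretely, for each $n\in\mathbb{N}$ I would choose a finite open cover of $E$ by balls $B(u_i^n,1/n)$, $i=1,\ldots,k_n$, together with a continuous partition of unity $\{\psi_i^n\}$ subordinate to this cover (which exists because compact metric spaces are normal). Define
$$f_n(t)(u)\;=\;\sum_{i=1}^{k_n}\psi_i^n(u)\,g(t,u_i^n).$$
Each $f_n(t)$ lies in $C(E)$ as a finite linear combination of continuous functions, so $f_n$ may be viewed as a $C(E)$-valued map on $[0,T]$.

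Next I would verify strong measurability of each $f_n$. By hypothesis (b) the scalar coefficients $t\mapsto g(t,u_i^n)$ are Lebesgue measurable, and $f_n$ is a finite sum of products of such scalar measurable functions with fixed elements $\psi_i^n\in C(E)$. Since $t\mapsto \varphi(t)x_0$ is strongly measurable for any scalar measurable $\varphi$ and fixed $x_0$ in a Banach space, each $f_n$ is strongly measurable as a $C(E)$-valued function.

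Finally I would use hypothesis (a), combined with compactness of $E$, to upgrade continuity of $g(t,\cdot)$ to uniform continuity for each fixed $t$. Given $t$ and $\varepsilon>0$, pick $n$ so large that $|g(t,u)-g(t,u')|<\varepsilon$ whenever $d(u,u')<1/n$. Since $\psi_i^n(u)\neq 0$ only when $d(u,u_i^n)<1/n$, one obtains $\|f_n(t)-g(t,\cdot)\|_{C(E)}\le \varepsilon$, so $f_n(t)\to g(t,\cdot)$ in $C(E)$ for every $t\in[0,T]$. The pointwise limit of a sequence of strongly measurable Banach-valued functions is strongly measurable (the range remains in a separable subspace), which yields the conclusion. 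The only delicate point I expect is guaranteeing the existence of the continuous partition of unity and checking the uniform approximation step, but both are routine given compactness and normality of $E$; no deep measure-theoretic machinery such as Pettis' theorem or Scorza--Dragoni is actually needed.
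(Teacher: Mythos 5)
Your argument is correct, but note that the paper itself offers no proof of this lemma at all: it is quoted verbatim from Fattorini's book \cite{relax-ref-book} and used as an imported tool, so there is no in-paper argument to compare against. Your constructive route --- finite covers of the compact space $E$ by $1/n$-balls, a subordinate continuous partition of unity $\{\psi_i^n\}$ (realizable explicitly via normalized distance functions $d(\cdot,E\setminus B(u_i^n,1/n))$), strong measurability of each $f_n=\sum_i g(\cdot,u_i^n)\psi_i^n$ as a finite sum of scalar-measurable coefficients times fixed vectors, and pointwise convergence $f_n(t)\to g(t,\cdot)$ in the sup-norm via uniform continuity of $g(t,\cdot)$ on the compact $E$ --- is complete and sound; the $t$-dependence of the index $n$ needed for a given $\varepsilon$ is harmless since only pointwise convergence in $t$ is required. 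The more common textbook route is slightly shorter but less elementary: since $E$ is compact metric, $C(E)$ is separable, and for any fixed $h\in C(E)$ the map $t\mapsto\|g(t,\cdot)-h\|_{C(E)}=\sup_{u\in D}|g(t,u)-h(u)|$ (with $D\subseteq E$ countable dense) is measurable as a countable supremum of measurable functions, so $t\mapsto g(t,\cdot)$ is Borel measurable into a separable Banach space and hence strongly measurable by Pettis' theorem. Your approach trades that appeal to Pettis for an explicit approximating sequence, which is arguably more self-contained; both are valid.
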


\section{Original Fractional Impulsive Control systems}

The following impulsive system is taken initially for consideration.
\begin{eqnarray}
\label{eqn:Relax-original-1}
\left\{
  \begin{array}{ll}
    D_{0+}^{\mu,\nu}x(t)= A x(t)+g(t,x(t)), \enspace t\in [0,T],\enspace t\neq t_{k},\\
    I_{0+}^{(1-\lambda)}[x(t)]_{t=0}= x_{0},\\
    \Delta I_{t_{k}}^{(1-\lambda)}x(t_{k})=\phi_{k}(t_{k}^{-},x(t_{k}^{-})), \enspace k=1,2,\ldots n,\enspace 0=t_{0}<\ldots<t_{n+1}=T.
  \end{array}
\right.
\end{eqnarray}

\begin{definition}\rm{\cite{Hilfer-impulsive-inclusion}}
A function $x \in PC_{1-\lambda}([0,T],E)$ is called the mild solution of system \eqref{eqn:Relax-original-1}, if for $ t \in J$
it satisfies the following integral equation
\begin{equation}
\begin{aligned}
\label{eqn:Relax-solution-origi}
x(t)=S_{\mu,\nu}(t)x_{0}+\displaystyle \sum_{i=1}^{k}S_{\mu,\nu}(t-t_{i})\phi_{i}(t_{i}^{-},x(t_{i}^{-}))\\
+\int_{0}^{t}(t-s)^{\mu-1}P_{\mu}(t-s)g(s,x(s))ds,
\end{aligned}
\end{equation}
where,
\begin{equation*}
  \begin{split}
    S_{\mu,\nu}(t)= I_{0+}^{\nu (1-\mu )}&K_{\mu }(t),\enspace K_{\mu}(t)=t^{\mu -1}P_{\mu }(t),\\
   P_{\mu}(t)=&\int_{0}^{\infty}\mu \theta \xi_{\mu}(\theta)Q(t^{\mu}\theta)d\theta,
   \end{split}
   \end{equation*}
   where $\xi_{\mu}(\theta)= \frac{1}{\mu}\theta^{-1-\frac{1}{\mu}}\varpi_{\mu}(\theta^{-\frac{1}{\mu}})$ is a probability density function defined on $(0,\infty)$, that is
\begin{align*}
\xi_{\mu}(\theta)\geq 0 \enspace \mbox{and} \int^{\infty}_{0}\xi_{\mu}(\theta)d\theta=1
\end{align*}
and
   \begin{align*}
   \varpi_{\mu}(\theta)= \frac{1}{\pi} \sum_{n=1}^{\infty}(-1)^{n-1}&\theta^{-n\mu-1}\frac{\Gamma(n\mu+1)}{n!}\sin(n\pi\mu),\enspace \theta\in (0,\infty).
\end{align*}
and $\xi_{\mu}$ is a probability density function defined on $(0,\infty)$, that is
\begin{align*}
\xi_{\mu}(\theta)\geq 0 \enspace \mbox{and} \int^{\infty}_{0}\xi_{\mu}(\theta)d\theta=1.
\end{align*}
\end{definition}
\begin{remark}{${}$}
\begin{enumerate}
\item
From \eqref{eqn:Relax-solution-origi}, when $\nu=0$, the solution reduces to the solution of classical
Riemann-Liouville fractional derivative, that is, $S_{\mu,0}(t)=P_{\mu}(t)$.
\item
Similarly when $\nu=1$, the solution reduces to the solution of classical
Caputo fractional derivative, that is $S_{\mu,1(t)}=S_{\mu}(t)$.
\end{enumerate}
\end{remark}

In the sequel, few hypothesis are adopted to develop the required theory. One of them is given below.
\begin{hypothesis}\label{hypo:Relax-hypo-origi}{${}$}
\begin{enumerate}[align=left]
\item [$H(g_{o})$]- For the function $g:J\times E \rightarrow E$,
\begin{enumerate}
\item
 $g(t,x)$ is measurable with reference to the first variable $t$ on $[0,T]$ for fixed $x$ and continuous in regard to $x$ for fixed $t$.
\item
$\|g(t,x)\|_{E} \leq \alpha_{o}(t)+\beta_{o}(t-t_{k})\|x\|_{E} $, where $$\alpha_{o} \in L^{p}(J,\mathbb{R}^{+}),\enspace \beta_{o} > 0 ,\enspace p>\dfrac{1}{\lambda}$$
for almost every $t\in J$.
\item
There exists a constant $L_{o}>0$ such that
\begin{align*}
\|g(t,x)-g(t,y)\|_{E}\leq L_{o}(t-t_{k})^{1-\lambda}\|x-y\|_{E}, \enspace x,y\in E.
\end{align*}
\end{enumerate}
\item [$H(h)$]-
There exists constants $h_{k}$, for $(k=1,2,\ldots, n+1)$ with
\begin{align*}
0<h_{k}<\dfrac{\Gamma(\lambda)}{[2M \sum _{i=1}^{k}(t_{i}-t_{i-1})^{\lambda-1}]}
\end{align*}
and hence
$\left(1-\dfrac{M}{\Gamma(\lambda)}\sum_{i=1}^{k}h_{i}(t_{i}-t_{i-1})^{\lambda-1}\right)\neq 0$ such that
\begin{align*}
\|\phi(t_{k}^{-},x)-\phi(t_{k}^{-},y)\|_{E}\leq h_{k}\|x-y\|_{E}, \enspace \forall x,y\in E.
\end{align*}
\end{enumerate}
\end{hypothesis}
The lemma herein illustrates the properties of the bounded linear operators that are present in the mild solution.

\begin{lemma}\rm{\cite{Hilfer-impulsive-inclusion}}{${}$}
\label{lem:Relax-bounds}
If the $C_{0}$ semigroup $Q(t)(t\geq 0)$ is bounded uniformly, then the operator, $P_{\mu}(t)$ and $S_{\mu,\nu}(t)$ satisfies the following bounded and
continuity conditions.
\begin{enumerate}[\rm(i)]
\item
$S_{\mu,\nu}(t)$ and $P_{\mu}(t)$ are linear bounded operators and for any $x\in E$
\begin{align*}
\|S_{\mu,\nu}(t)x\|_{E}\leq \dfrac{M t^{\lambda-1}}{\Gamma(\lambda)}\|x\|_{E}\enspace \mbox{and}\enspace \|P_{\mu}(t)x\|_{E}\leq \dfrac{M}{\Gamma(\mu)}\|x\|_{E}.
\end{align*}
\item
$S_{\mu,\nu}(t)$ and $P_{\mu}(t)$ are strongly continuous, which means that for any $x\in E$ and $0<t^{'}<t^{''}\leq T$,
\begin{align*}
\|P_{\mu}(t')x-P_{\mu}(t'')x\|_{E}\rightarrow 0 \enspace \mbox{and}\enspace
\|S_{\mu,\nu}(t')x-S_{\mu,\nu}(t'')x\|_{E}\rightarrow 0 \enspace \mbox{as}\enspace t''\rightarrow t'.
\end{align*}
\end{enumerate}
\end{lemma}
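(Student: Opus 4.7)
The proof splits into a norm estimate and a continuity assertion for each of the two operators, and the whole argument is driven by one moment identity for the Mainardi-type density $\xi_\mu$, namely
\[
\int_0^\infty \theta^{r}\,\xi_\mu(\theta)\,d\theta \;=\; \frac{\Gamma(1+r)}{\Gamma(1+\mu r)},\qquad r>-1,
\]
which at $r=1$ specialises to $\int_0^\infty \mu\theta\,\xi_\mu(\theta)\,d\theta = 1/\Gamma(\mu)$. This identity is the engine of every estimate below; I would derive it once up front, either by termwise integration of the series defining $\varpi_\mu$ or from the Laplace transform $\mathcal{L}\{\xi_\mu\}(s) = E_\mu(-s)$ (Mittag--Leffler).

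For part (i), the bound on $P_\mu$ is immediate: placing $\|Q(t^\mu\theta)\|_{L_b(E)}\le M$ inside the defining integral and pulling out $M\|x\|_E$ reduces the problem to the moment identity at $r=1$, yielding $\|P_\mu(t)x\|_E\le \frac{M}{\Gamma(\mu)}\|x\|_E$. For $S_{\mu,\nu}$ I would work from the integral form
\[
S_{\mu,\nu}(t)x \;=\; \frac{1}{\Gamma(\nu(1-\mu))}\int_0^t (t-s)^{\nu(1-\mu)-1}\,s^{\mu-1}\,P_\mu(s)x\,ds
\]
obtained by writing out the Riemann--Liouville integral $I_{0+}^{\nu(1-\mu)}$ applied to $s^{\mu-1}P_\mu(s)$, substitute the $P_\mu$-bound under the integral, and reduce to a Beta integral
\[
\int_0^t (t-s)^{\nu(1-\mu)-1}\,s^{\mu-1}\,ds \;=\; t^{\lambda-1}\,\frac{\Gamma(\nu(1-\mu))\,\Gamma(\mu)}{\Gamma(\lambda)},
\]
using the key algebraic identity $\nu(1-\mu)+\mu = \lambda$. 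The gamma prefactors cancel against the $1/\Gamma(\nu(1-\mu))$ and $1/\Gamma(\mu)$ outside, leaving exactly $\|S_{\mu,\nu}(t)x\|_E \le \frac{M t^{\lambda-1}}{\Gamma(\lambda)}\|x\|_E$.

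For part (ii), strong continuity of $P_\mu$ is handled by passing the limit $t''\to t'$ inside the integral representation: for each fixed $\theta$ the strong continuity of the semigroup yields $Q((t'')^\mu\theta)x \to Q((t')^\mu\theta)x$, and the integrand is dominated uniformly in $t$ (in a neighbourhood of $t'$) by $2M\mu\theta\,\xi_\mu(\theta)\|x\|_E$, which is integrable by the moment identity; dominated convergence closes the argument. Strong continuity of $S_{\mu,\nu}$ follows by the same philosophy applied to the Beta-type representation, combined with continuity of $P_\mu$ that was just established. The step I expect to be the main obstacle is this last one: the upper limit of the integral defining $S_{\mu,\nu}$ moves with $t$, and the kernel $(t-s)^{\nu(1-\mu)-1}s^{\mu-1}$ is singular at both endpoints, so a naive dominated convergence does not apply directly. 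The clean workaround is the standard $\varepsilon$-splitting trick: bound the $[0,\varepsilon]$ contribution uniformly in $t$ by $\|x\|_E$ times an incomplete-Beta expression that tends to $0$ as $\varepsilon\to 0$, and treat $[\varepsilon,\min(t',t'')]$ by uniform continuity of $s\mapsto P_\mu(s)x$ on compact subsets of $(0,\infty)$; the symmetric tail near the moving endpoint is estimated in the same way after a change of variable $s\mapsto t-s$.
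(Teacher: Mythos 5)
Your proof is essentially correct, but note that the paper itself offers no proof of this lemma: it is imported verbatim from the cited reference of Debbouche and Antonov, so there is no internal argument to compare against. Your reconstruction is the standard one from that literature (Gu--Trujillo, Zhou--Jiao): the moment identity $\int_0^\infty\theta^r\xi_\mu(\theta)\,d\theta=\Gamma(1+r)/\Gamma(1+\mu r)$ at $r=1$ gives the $P_\mu$ bound, and the Beta integral with $\nu(1-\mu)+\mu=\lambda$ gives the $S_{\mu,\nu}$ bound. One point worth flagging explicitly: the paper's displayed definition $S_{\mu,\nu}(t)=I_{0+}^{\nu(1-\mu)}P_\mu(t)$ cannot be taken literally, since applying $I_{0+}^{\nu(1-\mu)}$ to $P_\mu$ alone yields a quantity that is bounded as $t\to0^+$ and so cannot satisfy the stated estimate $Mt^{\lambda-1}/\Gamma(\lambda)$ when $\lambda<1$; the intended definition (as in the cited source) is $S_{\mu,\nu}(t)=I_{0+}^{\nu(1-\mu)}\bigl[s^{\mu-1}P_\mu(s)\bigr](t)$, which is exactly the integral form you wrote down, so your implicit correction of the paper's typo is the right move. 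Two minor refinements: (a) the case $\nu=0$ must be treated separately, since $\Gamma(\nu(1-\mu))$ is then undefined and $S_{\mu,0}(t)=t^{\mu-1}P_\mu(t)$ directly, with $\lambda=\mu$; (b) for the strong continuity of $S_{\mu,\nu}$ you can avoid the $\varepsilon$-splitting entirely by substituting $s=t\sigma$, which fixes the domain of integration to $[0,1]$, pulls out the factor $t^{\lambda-1}$, and reduces the claim to dominated convergence against the integrable kernel $(1-\sigma)^{\nu(1-\mu)-1}\sigma^{\mu-1}$ using the uniform bound and strong continuity of $P_\mu$ already established.
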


\begin{theorem}\label{thm:Relax-exist original}
Assume that \textbf{$H(g_{o})$} and \textbf{$H(h)$} given by the Hypothesis \eqref{hypo:Relax-hypo-origi} are true. Then, for every $x_{0} \in E$, the system
\eqref{eqn:Relax-original-1}
has a unique mild solution on $PC_{1-\lambda}([0,T],E)$.
\end{theorem}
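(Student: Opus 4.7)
The natural route is Banach's contraction principle applied to the integral operator whose fixed points are precisely the mild solutions in the sense of \eqref{eqn:Relax-solution-origi}. I would define $F : PC_{1-\lambda}([0,T],E) \to PC_{1-\lambda}([0,T],E)$ by
\begin{align*}
(Fx)(t) := S_{\mu,\nu}(t)x_{0} + \sum_{i=1}^{k} S_{\mu,\nu}(t-t_{i})\phi_{i}(t_{i}^{-},x(t_{i}^{-})) + \int_{0}^{t}(t-s)^{\mu-1}P_{\mu}(t-s)g(s,x(s))\,ds
\end{align*}
for $t \in (t_{k},t_{k+1}]$. Well-definedness on $PC_{1-\lambda}$ follows from $H(g_{o})(2)$ via H\"older's inequality, where the standing assumption $p > 1/\lambda$ is precisely what is needed to integrate the product of the singular kernel $(t-s)^{\mu-1}$ with the $L^{p}$ function $\alpha_{o}$ after insertion of the weight $(t-t_{k})^{1-\lambda}$.

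The next step is to estimate $(t-t_{k})^{1-\lambda}\|(Fx)(t)-(Fy)(t)\|_{E}$ for $x,y \in PC_{1-\lambda}$ and $t \in (t_{k},t_{k+1}]$, splitting into the impulse sum and the convolution. For the impulse part I would use Lemma \ref{lem:Relax-bounds}(i) to bound $\|S_{\mu,\nu}(t-t_{i})z\|_{E} \leq M(t-t_{i})^{\lambda-1}\|z\|_{E}/\Gamma(\lambda)$, the Lipschitz constant $h_{i}$ supplied by $H(h)$, and the obvious estimate $\|x(t_{i}^{-})-y(t_{i}^{-})\|_{E} \leq (t_{i}-t_{i-1})^{\lambda-1}\|x-y\|_{PC_{1-\lambda}}$. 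Together with the elementary inequality $(t-t_{k})^{1-\lambda}(t-t_{i})^{\lambda-1} \leq 1$ for $i \leq k$ (which uses $\lambda \leq 1$), the sum collapses to $\frac{M}{\Gamma(\lambda)}\sum_{i=1}^{k} h_{i}(t_{i}-t_{i-1})^{\lambda-1}\|x-y\|_{PC_{1-\lambda}}$, whose coefficient is strictly less than $1/2$ by the size condition in $H(h)$.

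For the convolution term I would split the integral along the grid $0 = t_{0}<t_{1}<\cdots<t_{k}<t$ and apply $H(g_{o})(3)$ on each subinterval $(t_{i},t_{i+1}]$ to obtain
\begin{align*}
\|g(s,x(s))-g(s,y(s))\|_{E} \leq L_{o}(s-t_{i})^{1-\lambda}\|x(s)-y(s)\|_{E} \leq L_{o}\|x-y\|_{PC_{1-\lambda}},
\end{align*}
since the weight $(s-t_{i})^{1-\lambda}$ baked into the Lipschitz constant is exactly the one that cancels the blow-up $(s-t_{i})^{\lambda-1}$ inherent in the $PC_{1-\lambda}$-norm. This is the key algebraic cancellation that $H(g_{o})(3)$ is custom-designed to produce; without it the piecewise weight would leave a non-integrable singularity at each impulse point. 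What remains is the harmless bound $\int_{0}^{t}(t-s)^{\mu-1}\,ds = t^{\mu}/\mu$, yielding a constant of the order $ML_{o}T^{\mu}/(\mu\Gamma(\mu))$ after multiplying back by $(t-t_{k})^{1-\lambda}$.

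The hard part, and the last step, is that this convolution constant is \emph{not} automatically smaller than $1/2$ --- it scales with $T^{\mu}$. I would dispose of it by one of two standard routes: either introduce a Bielecki-type equivalent norm $\|x\|_{*} := \max_{k}\sup_{t\in(t_{k},t_{k+1}]}(t-t_{k})^{1-\lambda}e^{-\rho (t-t_{k})}\|x(t)\|_{E}$ with $\rho$ large enough to shrink the exponential moment of the kernel below the tolerance left by the impulse estimate, or equivalently iterate the Picard operator $F^{n}$ on each subinterval $(t_{k},t_{k+1}]$ with the previous endpoint (adjusted by $\phi_{k}$) as the new Cauchy datum until the $n$-th iterate becomes a contraction. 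Either route furnishes a unique fixed point of $F$ in $PC_{1-\lambda}([0,T],E)$, which by construction is the unique mild solution of \eqref{eqn:Relax-original-1}.
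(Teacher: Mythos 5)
Your proposal is correct and follows essentially the same route as the paper: the paper applies Banach's fixed point theorem to the same integral operator on $PC_{1-\lambda}([0,T],E)$, equipped with a Bielecki-type weighted norm $\|x\|_{r}=\max_{k}\sup_{t\in(t_{k},t_{k+1}]}(t-t_{k})^{1-\lambda}e^{-rt}\|x(t)\|_{E}$ with $r$ chosen large, using the kernel estimate $\int_{0}^{t}(t-s)^{\mu-1}e^{rs}\,ds\le r^{-\mu}e^{rt}\Gamma(\mu)$ together with the smallness of the impulse coefficients from $H(h)$ to obtain contraction constant $\tfrac{1}{2}$ --- exactly the first of your two suggested routes for handling the convolution term.
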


\begin{proof}
For each $x_{0}\in E$ and $u\in U_{o}$, define the operator $\mathcal{G}:PC_{1-\lambda}([0,T],E)\rightarrow PC_{1-\lambda}([0,T],E)$ by
\begin{align}
\label{eqn:Relax-mild-original-1}
(\mathcal{G}x)(t)=
\left\{
  \begin{array}{ll}
  S_{\mu,\nu}(t)x_{0}+\int_{0}^{t}(t-s)^{\mu-1}P_{\mu}(t-s)g(s,x(s))ds, \enspace t\in [0,t_{1}]\\
  S_{\mu,\nu}(t)x_{0}+\displaystyle \sum_{i=1}^{k}S_{\mu,\nu}(t-t_{i})\phi_{i}(t_{i}^{-},x(t_{i}^{-}))\\
  \enspace+\int_{0}^{t}(t-s)^{\mu-1}P_{\mu}(t-s)g(s,x(s))ds, \enspace t\in (t_{k},t_{k+1}],\enspace k=1,2,\ldots n.
  \end{array}
\right.
\end{align}
The problem of finding mild solution for the system \eqref{eqn:Relax-original-1} is curtailed in determining the fixed point of
$\mathcal{G}$. To prove this, the operator $\mathcal{G}$ on the Banach space $PC_{1-\lambda}([0,T],E)$ is assumed with a weighted norm
\begin{align}
\label{eqn:Relax-radius}
\|x\|_{r}=\mbox{max}\{\displaystyle\sup_{t\in (t_{k},t_{k+1}]}(t-t_{k})^{1-\lambda} \|x(t)\|_{E}\,\,e^{-rt}; k=1,2,\ldots,n \},
\end{align}
where
\begin{align}
\label{eqn:Relax-Radius}
r=\max\left\{{2\Gamma(\lambda)MLT^{1-\lambda}}{[\Gamma(\lambda)-2M\sum _{i=1}^{k}h_{i}(t_{i}-t_{i-1})^{\lambda-1}]^
{\frac{1}{\mu}}}; k=1,2,\ldots,n+1\right\}.
\end{align}
With this $r$ as radius, $B_{r}(R)$ is defined as,
$$B_{r}(R)=\{x\in PC_{1-\lambda}([0,T],E): \|x\|_{r}\leq R\}, $$ where $R=2\tau$. For $t\in (t_{k},t_{k+1}]$,
\begin{equation}
\begin{aligned}
\label{eqn:Relax-tau-impulsive}
\tau&=\dfrac{M\|x_{0}\|_{E}}{\Gamma(\lambda)}+\dfrac{M}{\Gamma(\lambda)}\sum _{i=1}^{k}\|\phi_{i}(t_{i},0)\|_{E}\\
&\quad +\dfrac{MT^{\nu(\mu-1)+1-\frac{1}{p}}}{\Gamma(\mu)}
\left(\dfrac{p-1}{p\mu-1}\right)^{1-\frac{1}{p}}\|\alpha_{o}\|_{L^{p}(J,\mathbb{R}^{+})}.
\end{aligned}
\end{equation}
Instantly, for $t\in [0,t_{1}]$,
\begin{align}
\label{eqn:Relax-tau}
\tau=\dfrac{M\|x_{0}\|_{E}}{\Gamma(\lambda)}+\dfrac{MT^{\nu(\mu-1)+1-\frac{1}{p}}}{\Gamma(\mu)}
\left(\dfrac{p-1}{p\mu-1}\right)^{1-\frac{1}{p}}\|\alpha_{o}\|_{L^{p}(J,\mathbb{R}^{+})}.
\end{align}

For the ease of further progress of the theorem, the proof is split into two steps.

\textbf{Step1:} First it has to be proved that the operator $\mathcal{G}$ maps $B_{r}(R)$ to $B_{r}(R)$. That is, if
 $\|x\|_{r}\leq R$, it has to be proved that $(t-t_{k})^{1-\lambda}\|(\mathcal{G}x)(t)\|_{E} \leq R$. Being an impulsive system, two cases
 have to be discussed separately.\\
\textbf{Case one}: When $t \in [0,t_{1}]$:-
The equation of the mild solution \eqref{eqn:Relax-mild-original-1} gives
\begin{align*}
t^{1-\lambda}\|(\mathcal{G}x)(t)\|&\leq  t^{1-\lambda}\|S_{\mu,\nu}(t)x_{0}\|+
t^{1-\lambda}\int_{0}^{t}(t-s)^{\mu-1}\|P_{\mu}(t-s)g(s,x(s))\|ds.
\end{align*}
Further, by Lemma \ref{lem:Relax-bounds}, it is easy to see that
\begin{align}
\label{Relax-Inequality-case1}
t^{1-\lambda}\|(\mathcal{G}x)(t)\|& \leq \dfrac{t^{1-\lambda}Mt^{\lambda-1}\|x_{0}\|}{\Gamma(\mu)}+
\dfrac{Mt^{1-\lambda}}{\Gamma(\lambda)}\int_{0}^{t}(t-s)^{\mu-1}\|g(s,x(s))\|ds.
\end{align}
Now, utilizing the assumption $H(g_{o})$ and $H(h)$ of Hypothesis \ref{hypo:Relax-hypo-origi} to calculate the right hand side of the integral of
\eqref{Relax-Inequality-case1} gives,
\begin{align*}
\int_{0}^{t}(t-s)^{\mu-1}&\|g(s,x(s))\|ds \leq  \int_{0}^{t}(t-s)^{\mu-1}\|g(s,0)-g(s,0)+g(s,x(s))\|ds\\
&\leq \int_{0}^{t}(t-s)^{\mu-1}\|g(s,0)\|ds+ \int_{0}^{t}(t-s)^{\mu-1}\|g(s,x(s))-g(s,0)\|ds\\
&\leq \int_{0}^{t}(t-s)^{\mu-1}\alpha_{o}(s)ds+ \int_{0}^{t}(t-s)^{\mu-1}L_{o}s^{1-\lambda}\|x(s)\|ds.
\end{align*}
The above inequality using \eqref{eqn:Relax-radius} gives,
\newline
$\displaystyle \int_{0}^{t}(t-s)^{\mu-1}\|g(s,x(s))\|ds$
\begin{align*}
\quad\leq\int_{0}^{t}(t-s)^{\mu-1}\alpha_{o}(s)ds+ \|x\|_{r}\int_{0}^{t}(t-s)^{\mu-1}L_{o}s^{1-\lambda}e^{rs}ds.
\end{align*}
Making use of the relation
\begin{align*}
\int_{0}^{t}(t-s)^{\mu-1}e^{rs}ds\leq r^{-\mu}e^{rt}\Gamma(\mu),
\end{align*}
which is obtained by substituting $(t-s)=\dfrac{z}{r}$, results in,
\begin{align*}
\int_{0}^{t}(t-s)^{\mu-1}\|g(s,x(s))\|ds
\leq\int_{0}^{t}(t-s)^{\mu-1}\alpha_{o}(s)ds+ \|x\|_{r}L_{o}T^{1-\lambda}r^{-\mu}e^{rt}\Gamma(\mu).
\end{align*}
Substituting this integral inequality in \eqref{Relax-Inequality-case1}, summarizes to
\newline
$\displaystyle t^{1-\lambda}\|(\mathcal{G}x)(t)\|$
\begin{align*}
\leq  \dfrac{M\|x_{0}\|}{\Gamma(\lambda)}+\dfrac{MT^{1-\lambda}}{\Gamma(\mu)}\int_{0}^{t}(t-s)^{\mu-1}\alpha_{o}(s)ds+
MT^{1-\lambda}\|x\|_{r}L_{o}(s)r^{-\mu}e^{rt}.
\end{align*}
Now applying H\"{o}lder inequality gives, for $0<p<1$,
\newline
$\displaystyle
t^{1-\lambda}\|(\mathcal{G}x)(t)\|_{E}
$
{\small{
\begin{align*}
\leq  \dfrac{M\|x_{0}\|_{E}}{\Gamma(\lambda)}+
\dfrac{MT^{\nu(\mu-1)+1-\frac{1}{p}}}{\Gamma(\mu)}\left(\dfrac{p-1}{p\mu-1}\right)^{1-\frac{1}{p}}\|\alpha_{o}\|_{L^{p}(J,\mathbb{R}^{+})}
+ML_{o}T^{1-\lambda}e^{rt}r^{-\mu}\|x\|_{r}.
\end{align*}}}
From \eqref{eqn:Relax-tau}, replacing the first two terms in the above equation with $\tau$, results in,
\begin{align*}
\sup_{t\in[0,t_{1}]}t^{1-\lambda}\|(\mathcal{G}x)(t)\|_{E}\,\,e^{-rt}\leq \tau + MLT^{1-\lambda}r^{-\mu}\|x\|_{r}\leq R.
\end{align*}

\textbf{Case two:} When $t \in (t_{k},t_{k+1}](k=1,2,\ldots,n) $:- Proceeding in the same way gives
\newline
$\displaystyle(t-t_{k})^{1-\lambda}\|(\mathcal{G}x)(t)\|$
\begin{align*}
\leq  (t-t_{k})^{1-\lambda}\|S_{\mu,\nu}(t)x_{0}\|+
(t-t_{k})^{1-\lambda}\sum_{i=1}^{k}\|S_{\mu,\nu}(t-t_{i})\phi(t_{i}^{-},x(t_{i}^{-}))\|\\
    +(t-t_{k})^{1-\lambda}\int_{0}^{t}(t-s)^{\mu-1}\|P_{\mu}(t-s)g(s,x(s))\|ds.
\end{align*}
Instantly with the help of the Lemma \ref{lem:Relax-bounds} and the assumption $H(g_{o})$ and $H(h)$ of Hypothesis \ref{hypo:Relax-hypo-origi}   this leads to
\newline
$\displaystyle (t-t_{k})^{1-\lambda}\|(\mathcal{G}x)(t)\|$
\begin{align*}
\enspace \leq  (t-t_{k})&^{1-\lambda}\|S_{\mu,\nu}(t)x_{0}\|\\
&+(t-t_{k})^{1-\lambda}\sum_{i=1}^{k}\|S_{\mu,\nu}(t-t_{i})\|[\|\phi(t_{i}^{-},0)+\phi(t_{i}^{-},x(t_{i}^{-}))-\phi(t_{i}^{-},0)]\|\\
&+(t-t_{k})^{1-\lambda}\int_{0}^{t}(t-s)^{\mu-1}\|P_{\mu}(t-s)[g(s,x(s))+g(s,0)-g(s,0)]\|ds\\
\enspace \leq \dfrac{M\|x_{0}\|}{\Gamma(\lambda)}+
&\dfrac{M}{\Gamma(\lambda)}\sum_{i=1}^{k}h_{i}(t_{i}-t_{i-1})^{\lambda-1}(t_{i}-t_{i-1})^{1-\lambda}\|x(t_{i}^{-})\|\\
& +\dfrac{M}{\Gamma(\lambda)}\sum_{i=1}^{k}\|\phi(t_{i},0)\|+ \dfrac{M(t-t_{k})^{1-\lambda}}{\Gamma(\mu)}
 \int_{0}^{t}(t-s)^{\mu-1}[\|g(s,0)\|\\
& +L_{o}(s-t_{k})^{1-\lambda}\|x(s)\|]ds\\
\enspace \leq \dfrac{M\|x_{0}\|}{\Gamma(\lambda)}+&\dfrac{M}{\Gamma(\lambda)}\sum_{i=1}^{k}\|\phi(t_{i},0)\|+\dfrac{M (t-t_{k})^{1-\lambda}}{\Gamma(\mu)} \int_{0}^{t}(t-s)^{\mu-1}\|\alpha_{o}\|ds\\
&+\left(\dfrac{M \sum_{i=1}^{k}h_{i}(t_{i}-t_{i-1})^{\lambda-1}}{\Gamma(\lambda)}+ML_{o}T^{1-\lambda}r^{-\mu}\right)
e^{rt}\|x\|_{r}.
\end{align*}
Applying H\"{o}lder inequality, for $0<p<1$ gives
\begin{align*}
(t-t_{k})^{1-\lambda}\|(\mathcal{G}x)(t)\|_{E} & \leq\dfrac{M\|x_{0}\|_{E}}{\Gamma(\lambda)}+\dfrac{M}{\Gamma(\lambda)}\sum _{i=1}^{k}\|\phi_{i}(t_{i},0)\|_{E}\\
&+\dfrac{MT^{\nu(\mu-1)+1-\frac{1}{p}}}{\Gamma(\mu)}\left(\dfrac{p-1}{p\mu-1}\right)^{1-\frac{1}{p}}\|\alpha_{o}\|_{L^{p}(J,\mathbb{R}^{+})}\\
&+\left(\dfrac{M \sum_{i=1}^{k}h_{i}(t_{i}-t_{i-1})^{\lambda-1}}{\Gamma(\lambda)}+ML_{o}T^{1-\lambda}r^{-\mu}\right)
e^{rt}\|x\|_{r}.
\end{align*}
Substituting the value of $\tau$, concludes,
\newline
$\displaystyle
\sup_{t\in[0,t_{1}]}(t-t_{k})^{1-\lambda}\|(\mathcal{G}x)(t)\|_{E} \enspace e^{-rt}$
\begin{align*}
\leq \tau +\left(\dfrac{M \sum_{i=1}^{k}h_{i}(t_{i}-t_{i-1})^{\lambda-1}}{\Gamma(\lambda)}+ML_{o}T^{1-\lambda}r^{-\mu}\right)
e^{rt}\|x\|_{r} \leq R
\end{align*}
where $R=2\tau$. Consequently, summing up, shows that
$\mathcal{G}$ maps $B_{r}(R)$ to $B_{r}(R)$.

\textbf{Step2:} To prove that the system has a unique solution, by Banach's fixed point theorem, it is necessary to prove that $\mathcal{G}$
is a contraction operator on $B_{r}(R)$. As in Step 1, $t\in[0,t_{1}]$ is taken initially. For any $x,y\in PC_{1-\lambda}
([0,T],E)$,
\begin{align*}
t^{1-\lambda}\|(\mathcal{G}x)(t)-(\mathcal{G}y)(t)\|&\leq t^{1-\lambda}\int_{0}^{t}(t-s)^{\mu-1}\|P_{\mu}(t-s)[g(s,x(s))-g(s,y(s))]\|ds\\
& \leq \dfrac{MT^{1-\lambda}L_{o}}{\Gamma(\lambda)}\|x-y\|_{r}\int_{0}^{t}(t-s)^{\mu-1}e^{rs}ds \\
& \leq ML_{o}T^{1-\lambda}e^{rt}r^{-\lambda}\|x-y\|_{r} .
\end{align*}
This can be reduced to,
\begin{align*}
t^{1-\lambda}\|(\mathcal{G}x)(t)-(\mathcal{G}y)(t)\|_{E}\leq ML_{o}T^{1-\lambda}e^{rt}r^{-\lambda}\|x-y\|_{r} \leq \dfrac{1}{2}\|x-y\|_{r}.
\end{align*}
Next the case for $t \in (t_{k},t_{k+1}] (k=1,2,\ldots,m)$. Using the assumed hypothesis for any $x,y\in PC_{1-\lambda}([0,T],E)$, proceeding in the same manner gives
\newline
$\displaystyle (t-t_{k})^{1-\lambda}\|(\mathcal{G}x)(t)-(\mathcal{G}x)(t)\|$
\begin{align*}
&\leq  (t-t_{k})^{1-\lambda}\sum_{i=1}^{k}\|S_{\mu,\nu}(t-t_{i})[\phi(t_{i}^{-},x(t_{i}^{-}))-\phi(t_{i}^{-},y(t_{i}^{-}))]\|\\
&\quad +(t-t_{k})^{1-\lambda}\int_{0}^{t}(t-s)^{\mu-1}\|P_{\mu}(t-s)[g(s,x(s))-g(s,y(s))]\|ds\\
& \leq \left(\dfrac{M\sum_{i=1}^{k}h_{i}(t_{i}-t_{i-1})^{\lambda -1}}{\Gamma(\lambda)}+MLT^{1-\lambda}r^{-\mu}\right)e^{rt}\|x-y\|_{r}.
\end{align*}
Eventually, after substituting the value from \eqref{eqn:Relax-Radius}, it is easy to see that
$$\displaystyle \sup_{t\in(t_{k},t_{k+1}]}(t-t_{k})^{1-\lambda}\|(\mathcal{G}x)(t)-(\mathcal{G}y)(t)\|_{E}\,\,e^{-rt}\leq \dfrac{1}{2}\|x-y\|_{r}.$$
Consequently, this summarizes to
\newline
$\displaystyle \|(\mathcal{G}x)(t)-(\mathcal{G}y)(t)\|_{r}$
\begin{align*}
&=\max\left\{\sup_{t\in(t_{k},t_{k+1}]}(t-t_{k})^{1-\lambda}\|(\mathcal{G}x)(t)-(\mathcal{G}y)(t)\|_{E}\,\, e^{-rt}; k=1,2,\ldots,m\right\}\\
&\leq \dfrac{1}{2}\|x-y\|_{r}.
\end{align*}
The above analysis asserts that $\mathcal{G}$ is a contraction operator. Following the definition of the Banach fixed point theorem,
it can be concluded that the system \eqref{eqn:Relax-original-1} has a unique solution on $J$.
\end{proof}

To prove the result for the given original system \eqref{eqn:Relax-Hilfer fractional differential equation} and eventually for the relaxed system,
the assumed $H(g_{o})$ of  Hypothesis \ref{hypo:Relax-hypo-origi} is revised as
\begin{hypothesis}{${}$}
\label{hypo:Relax-hypo-relax}
\begin{enumerate}[align=left]
\item [$H(g_{r})$]- For the function $g:J\times E\times \Lambda  \rightarrow E$
\begin{enumerate}
\item
$g(t,x,\eta)$ is measurable referring to the first variable  $t$ for fixed $x$ and $\eta$ and continuous with respect to the second and third variable
$x$ and $\eta$ for fixed $t$.
\item
For almost every $t\in J$,
$$\|g(t,x,\eta)\|_{E} \leq \alpha_{r}(t)+\beta_{r}(t-t_{k})^{1-\lambda}\|x\|_{E},$$
where $\alpha_{r} \in L^{p}(J,\mathbb{R}^{+}),\enspace \beta_{r} \geq 0 $, \enspace $p>\dfrac{1}{\lambda}$ .
\item
Moreover, there exists a constant $L_{r}>0$ such that for $x,y\in E$, $0\leq t\leq T$ and $\eta\in \Lambda$,
\begin{align*}
\enspace \|g(t,x,\eta)-g(t,y,\eta)\|_{E} \leq L_{r}(t-t_{k})^{1-\lambda}\|x-y\|_{E}.
\end{align*}
\end{enumerate}
\end{enumerate}
\end{hypothesis}
\begin{theorem}
Assuming the Hypothesis $H(h)$ and $H(g_{r})$, there exists a unique mild solution $x \in PC_{1-\lambda}([0,T],E)$ with regard to
the original system \eqref{eqn:Relax-Hilfer fractional differential equation}. Such a solution is given by
\begin{align*}
x(t)=& S_{\mu,\nu}(t)x_{0}+\displaystyle \sum_{0<t_{i}<t}S_{\mu,\nu}(t-t_{i})\phi_{i}(t_{i}^{-},x(t_{i}^{-}))\nonumber\\
  &\enspace + \int_{0}^{t}(t-s)^{\mu-1}P_{\mu}(t-s)g(s,x(s),u(s))ds, \enspace t\in J,\enspace u \in U_{o}.
\end{align*}
\end{theorem}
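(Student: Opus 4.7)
The plan is to reduce this result to Theorem \ref{thm:Relax-exist original}, which has already been proven for the unparametrized control-free system. The idea is that once the control $u \in U_o$ is fixed, the nonlinearity $g(t,x,u(t))$ becomes a function of $t$ and $x$ only, and it should inherit the relevant properties from $H(g_r)$ so that $H(g_o)$ is satisfied.

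Concretely, I fix an arbitrary admissible control $u \in U_o$ and define $\tilde g : [0,T] \times E \to E$ by $\tilde g(t,x) := g(t,x,u(t))$. The goal is then to verify that $\tilde g$ satisfies the three clauses of Hypothesis \ref{hypo:Relax-hypo-origi} $H(g_o)$, with the same exponent $p$, the same $L^p$-majorant (taking $\alpha_o := \alpha_r$), and the same constants $\beta_o := \beta_r$, $L_o := L_r$. Clauses (2) and (3) are immediate: the growth bound follows by substituting $\eta = u(t)$ in $H(g_r)(2)$, and the Lipschitz estimate follows similarly from $H(g_r)(3)$, both uniformly in $\eta$. Continuity of $x \mapsto \tilde g(t,x)$ for fixed $t$ follows directly from joint continuity in $(x,\eta)$ in $H(g_r)(1)$. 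Once $H(g_o)$ is verified, I apply Theorem \ref{thm:Relax-exist original} to the map $\tilde g$, which yields a unique fixed point of the operator $\mathcal{G}$ constructed in its proof; the integral representation produced by that fixed-point argument is precisely the expression claimed for $x(t)$.

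The main obstacle, and the only nontrivial point, is verifying the $t$-measurability of $\tilde g(\cdot,x)$ for fixed $x \in E$. This is a composition issue: $u$ is only assumed strongly measurable into the Polish space $\Lambda$, and $g$ is Carathéodory in the last two variables. I plan to use the lemma stated at the end of Section 3 (relating continuous and measurable functions on a compact metric space) in combination with the strong measurability of $u$: for each fixed $x$, the map $(t,\eta) \mapsto g(t,x,\eta)$ is Carathéodory (measurable in $t$, continuous in $\eta$), and composing with the strongly measurable selector $u(\cdot)$ yields a measurable function $t \mapsto g(t,x,u(t))$ by standard superposition results. Since $\Lambda$ is Polish and $u$ is strongly measurable, Pettis' theorem permits approximation by simple functions, and measurability of the composition on each piece reduces to the Carathéodory property.

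Once these verifications are in place, the proof is essentially a one-line invocation of Theorem \ref{thm:Relax-exist original} applied to $\tilde g$, with the resulting fixed point $x \in PC_{1-\lambda}([0,T],E)$ satisfying the displayed integral equation in the theorem statement. Uniqueness is inherited from the contraction argument in Step 2 of Theorem \ref{thm:Relax-exist original}, since the constants $L_r$ and $h_k$ enter $H(g_o)$ and $H(h)$ in the same way, and the weighted norm \eqref{eqn:Relax-radius} together with the choice of $r$ in \eqref{eqn:Relax-Radius} makes the operator a strict contraction on $B_r(R)$.
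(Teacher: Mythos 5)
Your proposal follows essentially the same route as the paper: fix $u\in U_{o}$, define the reduced nonlinearity $G_{u}(t,x)=g(t,x,u(t))$, check that it satisfies $H(g_{o})$, and invoke Theorem \ref{thm:Relax-exist original}. You are in fact more careful than the paper on the one genuinely delicate point --- the $t$-measurability of the superposition $t\mapsto g(t,x,u(t))$, which the paper simply asserts --- and your Carath\'eodory-plus-strong-measurability argument is the right way to close that gap.
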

\begin{proof}
Define, $G_{u}(t,x)=g(t,x,u)$, where $u \in U_{o}$ by  $G_{u}:[0,T]\times E \rightarrow E$. By the assumption $H(g_{r})$ of
Hypothesis \ref{hypo:Relax-hypo-relax}, $G_{u}$ is measurable for the first variable $t$ on $[0,T]$, for fixed $x\in E$. Thereby $G_{u}$ meets the
presumed $H(g_{o})$ of Hypothesis \ref{hypo:Relax-hypo-relax}. The system
\eqref{eqn:Relax-Hilfer fractional differential equation} thus has a mild unique solution $x \in PC_{1-\lambda}([0,T],E)$ as claimed by Theorem
\ref{thm:Relax-exist original}.
\end{proof}

\section{Relaxed fractional Impulsive Control systems}

This section begins with some basic theories for proving the existence theory of the relaxed system's solution and
verifying the relaxation theorem. The reader may refer to \cite{relax-ref-book} for detailed study of relaxed controls.

For the compact Polish space $\Lambda$, the space of all continuous function be termed as $C(\Lambda)$. Its dual space $C(\Lambda)^{*}$, can be identified
with $\mathcal{M}(\Lambda)$, which is the subspace containing all probability measure $\sigma$ of the space $\mathcal{P}(\Lambda)$ of all finite regular
Borel in $\Lambda$ with $\|\sigma(t)\|_{\mathcal{M}(\Lambda)}=1$. Regarding the notion of convergence in $\mathcal{M}(\Lambda)$, called $weak^{*}$
convergence, it is defined as the following, for a sequence $\{\sigma_{n}\}$ of probability measure, $v_{n}$ $weak^{*}$ converges to $v$, as n$\rightarrow \infty$, if for
every $g \in C(\Lambda)$,
\begin{align}
\label{eqn:Relax-weak-cdn}
\int g d\sigma_{n} \rightarrow \int g d\sigma,\enspace  n\rightarrow \infty.
\end{align}
The duality pairing between $C(\Lambda)$ and dual space $\mathcal{M}(\Lambda)$ is exhibited by
\begin{align*}
\sigma(f):=\int_{\Lambda}f(\eta)\sigma(d\eta),\enspace \forall \sigma\in \mathcal{M}(\Lambda),\enspace f\in C(\Lambda),\enspace \eta\in\Lambda.
\end{align*}

In the view of the \cite[Theorem 12.2.11]{relax-ref-book} and \cite[Example 12.2.13]{relax-ref-book}, the dual
$L^{1}([0,T],C(\Lambda))^{*}$ is
isometrically isomorphic to $L^{\infty}([0,T],C(\Lambda)^{*})$. The space $L^{1}([0,T],C(\Lambda))$ and $L^{\infty}([0,T],C(\Lambda)^{*})$ with their
respective norms are Banach spaces of all strongly measurable functions. Denote the space of relaxed controls by
$\mathcal{R}(J,\mathcal{M}(\Lambda))$- the space of all measurable $\mathcal{M}(\Lambda)$-valued function on $[0,T]$. In other terms,
$$
v(\cdot)\in \mathcal{R}(J,\mathcal{M}(\Lambda)) \iff v(t)\in\mathcal{M}(\Lambda),\enspace a.e.\enspace t \in [0,T]
$$
and
\begin{align*}
t\mapsto \int_{\Lambda}f(\eta)v(t)d\eta\enspace \mbox{is measurable},\enspace \forall f\in C(\Lambda),\enspace \forall v\in \mathcal{R}(J,\mathcal{M}(\Lambda)).
\end{align*}
The space $\mathcal{R}(J,\mathcal{M}(\Lambda))$ being the subspace of
$L^{\infty}([0,T],\mathcal{M}(\Lambda))=L^{\infty}([0,T],C(\Lambda)^{*})$, the duality pairing between $\mathcal{R}(J,\mathcal{M}(\Lambda))$ and
$L^{1}([0,T],C(\Lambda))$ is set as
\begin{align*}
v(g):=\int_{0}^{T} dt \int_{\Lambda}g(t,\eta)v(t)(d\eta), \enspace \forall v(\cdot)\in \mathcal{R}(J,\mathcal{M}(\Lambda)),
\enspace g\in L^{1}([0,T],C(\Lambda)).
\end{align*}
The following queries have to be addressed to prove the relaxation theorem:-
\begin{enumerate}[(1)]
\item
Does the unique solution exists for such a relaxed control system? If it does exists, what is the bound?
\item
Can the trajectory of the relaxed system be approximated to the trajectory of the original system?
\item
Does an optimal solution exists for the relaxed system under some mild assumptions?
\item
Under what conditions the optimal control is same for both the system?
\end{enumerate}
The following subsections, answers the above questions.

\subsection{Existence and bound for relaxed system}
The next theorem brings out the relation between the original and the relaxed system, which will lead to the proof of the existence of the unique solution
of the relaxed system.
\begin{theorem}
\label{thm:Relax-original-relaxed}
Supposing that the original system \eqref{eqn:Relax-Hilfer fractional differential equation}, satisfy $H(g_{r})$ of Hypothesis \ref{hypo:Relax-hypo-relax}, then the relaxed
system satisfy $H(g_{o})$ of Hypothesis \ref{hypo:Relax-hypo-origi}, for every $v(\cdot)\in \mathcal{R}(J,\mathcal{M}(\Lambda))$ with $\alpha(\cdot), \beta(\cdot), L_{r}$
independent of $v(\cdot)$.
\end{theorem}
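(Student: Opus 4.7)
The plan is to verify the three parts of $H(g_{o})$ one by one for the averaged nonlinearity
\[
\tilde{g}_{v}(t,x) \;:=\; \int_{\Lambda} g(t,x,\eta)\,v(t)(d\eta),
\]
where $v(\cdot)\in\mathcal{R}(J,\mathcal{M}(\Lambda))$ is arbitrary but fixed. Since $v(t)$ is a probability measure on $\Lambda$, bounds that hold pointwise in $\eta$ transfer directly to bounds on $\tilde{g}_{v}$, so parts (2) and (3) are essentially immediate. The real work is part (1), the measurability in $t$, which is the step I expect to be the main obstacle.

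For the measurability, I would fix $x\in E$ and regard $\eta\mapsto g(t,x,\eta)$ as a member of $C(\Lambda,E)$, via $H(g_{r})(1)$. The lemma at the end of Section~3 (applied componentwise or in a vector-valued form consistent with the Polish hypothesis on $\Lambda$) then promotes $t\mapsto g(t,x,\cdot)$ to a strongly measurable $C(\Lambda)$-valued function. Using the duality pairing between $L^{1}([0,T],C(\Lambda))$ and $\mathcal{R}(J,\mathcal{M}(\Lambda))$ explicitly set up in this section, namely the fact that $t\mapsto \int_{\Lambda} f(\eta)\,v(t)(d\eta)$ is measurable for every $f\in C(\Lambda)$, one concludes that $t\mapsto \tilde{g}_{v}(t,x)$ is measurable. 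For continuity in $x$ with $t$ fixed, if $x_{n}\to x$ then $g(t,x_{n},\eta)\to g(t,x,\eta)$ in $E$ pointwise in $\eta$ by $H(g_{r})(1)$, and the bound in $H(g_{r})(2)$ gives a uniform (in $n$) integrable majorant since $\{\|x_{n}\|_{E}\}$ is bounded; dominated convergence against the probability measure $v(t)$ then yields $\tilde{g}_{v}(t,x_{n})\to \tilde{g}_{v}(t,x)$.

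For part (2), integrating $H(g_{r})(2)$ against the probability measure $v(t)$ gives
\[
\|\tilde{g}_{v}(t,x)\|_{E} \;\leq\; \int_{\Lambda}\bigl(\alpha_{r}(t)+\beta_{r}(t-t_{k})^{1-\lambda}\|x\|_{E}\bigr)\,v(t)(d\eta) \;=\; \alpha_{r}(t)+\beta_{r}(t-t_{k})^{1-\lambda}\|x\|_{E},
\]
so we may take $\alpha_{o}:=\alpha_{r}\in L^{p}(J,\mathbb{R}^{+})$ and $\beta_{o}:=\beta_{r}$, both independent of $v$. Part (3) is handled analogously from $H(g_{r})(3)$: for any $x,y\in E$,
\[
\|\tilde{g}_{v}(t,x)-\tilde{g}_{v}(t,y)\|_{E} \;\leq\; \int_{\Lambda} L_{r}(t-t_{k})^{1-\lambda}\|x-y\|_{E}\,v(t)(d\eta) \;=\; L_{r}(t-t_{k})^{1-\lambda}\|x-y\|_{E},
\]
so the Lipschitz constant $L_{o}:=L_{r}$ works and is likewise independent of $v$. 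Putting the three parts together gives exactly $H(g_{o})$ for $\tilde{g}_{v}$, which is the content of the theorem.

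The crux is therefore the strong-measurability step; once one has invoked the Section~3 lemma together with the duality description of $\mathcal{R}(J,\mathcal{M}(\Lambda))$, the growth and Lipschitz estimates follow by plain integration against a probability measure, and the $v$-independence of $\alpha_{o},\beta_{o},L_{o}$ is automatic from the fact that $v(t)(\Lambda)=1$.
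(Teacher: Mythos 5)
Your proposal is correct and follows essentially the same route as the paper: verify the three clauses of $H(g_{o})$ for the averaged nonlinearity, with the measurability in $t$ obtained from the Section~3 lemma and the duality pairing between $L^{1}([0,T],C(\Lambda))$ and $\mathcal{R}(J,\mathcal{M}(\Lambda))$, and the growth and Lipschitz bounds transferred verbatim because $v(t)$ is a probability measure. The only (immaterial) divergence is that you prove continuity in $x$ by dominated convergence against $v(t)$, whereas the paper uses the uniform continuity of $g(t,\cdot,\cdot)$ in $\eta$ coming from compactness of $\Lambda$; both are valid.
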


\begin{proof}
Given that the original system \eqref{eqn:Relax-Hilfer fractional differential equation} meets the assumption $H(g_{r})$. The proof should result in proving
that the relaxed system \eqref{eqn:Relax-relaxed sys} should satisfy all the three points of the assumption $H(g_{o})$ of Hypothesis \ref{hypo:Relax-hypo-origi}.
For convenience, the state equation in the relaxed system \eqref{eqn:Relax-relaxed sys} can be written as
\begin{align}
\label{eqn:Relax-relax state}
D_{0+}^{\mu,\nu}x(t)=Ax(t)+G(t,x(t))v(t), \enspace v(\cdot)\in U_{r},
\end{align}
where the function $G:[0,T] \times \Lambda \times \mathcal{P}(\Lambda)\rightarrow \Lambda$ is defined as
\begin{align}
\label{eqn:Relax-relax function}
G(t,x)v=\int_{\Lambda}g(t,x,\eta)v(d\eta), \enspace \eta\in \Lambda.
\end{align}
\begin{enumerate}[(1)]
\item
Given $(x,\eta)\mapsto g(t,x,\eta)$ is continuous in $x$ and $\eta$ for a fixed $t$. Since $\Lambda$ is a compact space,  $g(t,x,\eta)$ is uniformly
continuous for $\eta \in \Lambda$. More precisely,
\begin{align*}
\|g(t,x_{1},\cdot)-g(t,x_{2},\cdot)\|\rightarrow 0, \enspace x_{1}\rightarrow x_{2}.
\end{align*}
The continuity of $G(t,x)v$, with respect to $x$ for fixed $t$ is evident from the fact that,
\begin{align*}
\|G(t,x_{1})v-G(t,x_{2})v\|\leq \int_{\Lambda}[\|g(t,x_{1},\eta)-g(t,x_{2},\eta)\|\|v\|] d\eta.
\end{align*}
Same way, considering the assumption $H(g_{r})$ with the claim
\begin{align*}
t\mapsto g(t,x,\cdot),\enspace g(t,x,\cdot)\in L^{1}([0,T],C(\Lambda))
\end{align*}
and the fact that
\begin{align*}
t\mapsto v(t), \enspace v(t)\in \mathcal{R}(J,\mathcal{M}(\Lambda)),
\end{align*}
the duality pairing between $L^{1}([0,T],C(\Lambda))$ and $\mathcal{R}(J,\mathcal{M}(\Lambda))$ results in
\begin{align*}
t\mapsto \langle g(t,x,\cdot)v(t)\rangle=\int_{\Lambda}g(t,x,\eta)v(t)d\eta.
\end{align*}
Hence concluding $t\mapsto G(t,x)v$ is measurable in $t$ for fixed $x$.
\item
As the bound given in assumption $H(g_{r})$ is independent of $\eta$,
\begin{align}
\label{eqn:Relax-G0}
\|G(t,x)v\|_{E}=&\left\|\int_{\Lambda}g(t,x,\cdot)v(t)d\eta\right\|\nonumber\\
&\leq \alpha_{r}(t)+\beta_{r}(t-t_{k})^{1-\lambda}\|x\|_{E}\|v\|_{\mathcal{M}(\Lambda)}
\end{align}
As $\|v(t)\|=1, \|G(t,x)v\|\leq \alpha_{r}(t)+\beta_{r}(t-t_{k})^{1-\lambda}\|x\|_{E}$.
This shows that $G(t,x)v$ is independent of $v$.
\item
By  $H(g_{r})$ of Hypothesis \ref{hypo:Relax-hypo-relax}, $g(t,x,\eta)$ satisfies the Lipschitz continuity.\\
To verify the same for $G(t,x)v$, consider,
\begin{align}
\label{eqn:Relax-G2-G1}
\|G(t,x_{1})v-G(t,x_{2})v\|_{E}=&\left\|\int_{\Lambda}g(t,x_{1},\eta)v(t)d\eta-\int_{\Lambda}g(t,x_{2},\eta)v(t)d\eta\right\|\nonumber\\
&\leq \int_{\Lambda}\|g(t,x_{1},\eta)-g(t,x_{2},\eta)\|\|v(t)\|d\eta\nonumber\\
&\leq L_{r}(t-t_{k})^{1-\lambda}\|x_{1}-x_{2}\|_{E}\|v(t)\|_{\mathcal{M}(\Lambda)}.
\end{align}
For every $v(t) \in \mathcal{R}(J,\mathcal{M}(\Lambda)), \|v(t)\|=1 $, this proves that $G(t,x)v$ satisfies the Lipschitz continuity.
\end{enumerate}
As all three assumptions of $H(g_{o})$ are satisfied by the relaxed system, the theorem is proved.
\end{proof}
The proof leads to a corollary that guarantees the existence of a unique solution to the relaxed system.

\begin{corollary}
Based on Theorem \ref{thm:Relax-original-relaxed} the relaxed system \eqref{eqn:Relax-relaxed sys} satisfy $H(g_{o})$ of Hypothesis \ref{hypo:Relax-hypo-origi}.
The existence of the unique solution of the relaxed system is the outcome of Theorem \ref{thm:Relax-exist original}.
\end{corollary}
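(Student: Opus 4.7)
The plan is to fix an arbitrary $v(\cdot)\in U_{r}=\mathcal{R}(J,\mathcal{M}(\Lambda))$ and rewrite the relaxed state equation as an instance of the uncontrolled system \eqref{eqn:Relax-original-1}, to which Theorem \ref{thm:Relax-exist original} already supplies a unique mild solution. Concretely, I would define
\begin{align*}
G_{v}(t,x):=G(t,x)v(t)=\int_{\Lambda}g(t,x,\eta)v(t)(d\eta),
\end{align*}
so that the relaxed system \eqref{eqn:Relax-relaxed sys} takes the form $D_{0+}^{\mu,\nu}x(t)=Ax(t)+G_{v}(t,x(t))$ with the same initial condition and the same impulsive jumps $\phi_{k}$ as in \eqref{eqn:Relax-original-1}.

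Next, I would invoke Theorem \ref{thm:Relax-original-relaxed} to verify that $G_{v}$ meets all three clauses of $H(g_{o})$ in Hypothesis \ref{hypo:Relax-hypo-origi}: the measurability-continuity item (1), the growth bound (2) with the same data $\alpha_{r}$, $\beta_{r}$ inherited from $H(g_{r})$, and the Lipschitz estimate (3) with constant $L_{r}$ were all established bullet-by-bullet in the proof of that theorem. The key structural point invoked there is $\|v(t)\|_{\mathcal{M}(\Lambda)}=1$ a.e., which makes these constants independent of $v$. Since $H(h)$ is inherited unchanged (the impulsive part of the relaxed system is identical to that of the original one), both hypotheses required by Theorem \ref{thm:Relax-exist original} are in force for $G_{v}$.

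Finally, a direct application of Theorem \ref{thm:Relax-exist original} to the rewritten system yields a unique mild solution $x\in PC_{1-\lambda}([0,T],E)$, whose explicit form follows from \eqref{eqn:Relax-solution-origi} by replacing $g(s,x(s))$ with $\int_{\Lambda}g(s,x(s),\eta)v(s)(d\eta)$. There is essentially no obstacle in this corollary, since the Banach fixed point argument on the ball $B_{r}(R)$ with the weighted norm \eqref{eqn:Relax-radius} was already carried out in Theorem \ref{thm:Relax-exist original}. The one observation worth underlining is that, because the constants delivered by Theorem \ref{thm:Relax-original-relaxed} do not depend on $v$, the radius $R$ and the parameter $r$ from \eqref{eqn:Relax-Radius} can be taken uniformly in $v(\cdot)\in U_{r}$; this uniformity, while not strictly needed for existence, will be the essential ingredient when the subsequent subsections compare relaxed and original trajectories.
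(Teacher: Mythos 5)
Your proposal is correct and follows essentially the same route as the paper: the corollary is an immediate combination of Theorem \ref{thm:Relax-original-relaxed} (which shows the relaxed right-hand side $G(t,x)v$ satisfies $H(g_{o})$ with constants independent of $v$) with Theorem \ref{thm:Relax-exist original} applied to the rewritten system, the impulsive hypothesis $H(h)$ carrying over unchanged. Your closing remark on the uniformity of the constants in $v(\cdot)$ is a useful addition but not part of the paper's argument here.
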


\begin{lemma}
\label{lem:Relax-traj bound}
For any trajectory of the relaxed control system \eqref{eqn:Relax-relaxed sys}, with relaxed state \eqref{eqn:Relax-relax state}, there exists a constant $D$ such
that $\|x(t)\|_{PC_{1-\lambda}} \leq D, D=D_{1}$ or $D=D_{2}$ according to the interval.
\end{lemma}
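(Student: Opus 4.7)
The plan is to take the mild solution of the relaxed system given by Theorem~\ref{thm:Relax-exist original} (which applies to the relaxed system via the Corollary preceding the lemma), estimate it term by term, and handle the impulsive jumps separately. The key inputs are the operator bounds of Lemma~\ref{lem:Relax-bounds}, the uniform growth estimate $\|G(t,x)v\|_E \leq \alpha_r(t)+\beta_r(t-t_k)^{1-\lambda}\|x\|_E$ coming from \eqref{eqn:Relax-G0} (independent of $v$), and the Lipschitz bound $H(h)$ on the jumps. The two constants $D_1$ and $D_2$ will correspond to the estimates on $[0,t_1]$ and on a generic $(t_k,t_{k+1}]$, respectively.

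For $t \in [0,t_1]$, I would multiply the mild solution by $t^{1-\lambda}$ and apply Lemma~\ref{lem:Relax-bounds} to get
\begin{align*}
t^{1-\lambda}\|x(t)\|_E \leq \frac{M\|x_0\|_E}{\Gamma(\lambda)} + \frac{Mt^{1-\lambda}}{\Gamma(\mu)}\int_0^t (t-s)^{\mu-1}\alpha_r(s)\,ds + \frac{M\beta_r T^{1-\lambda}}{\Gamma(\mu)}\int_0^t (t-s)^{\mu-1}\|x(s)\|_E\, ds.
\end{align*}
Hölder's inequality (with $p>1/\lambda$) bounds the $\alpha_r$-integral by $T^{\nu(\mu-1)+1-1/p}\bigl(\tfrac{p-1}{p\mu-1}\bigr)^{1-1/p}\|\alpha_r\|_{L^p(J,\mathbb{R}^+)}$, so the inhomogeneous part is dominated by a constant. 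The linear term in $\|x\|$ is then absorbed either by the generalized fractional Gronwall inequality or, equivalently, by reusing the weighted norm $\|\cdot\|_r$ with exponential factor $e^{-rt}$ from the proof of Theorem~\ref{thm:Relax-exist original}; the latter makes the coefficient of $\|x\|_r$ strictly less than one for $r$ large, yielding an explicit bound $D_1$.

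For $t \in (t_k,t_{k+1}]$, the argument is the same, with the extra task of estimating $\sum_{i=1}^k S_{\mu,\nu}(t-t_i)\phi_i(t_i^-,x(t_i^-))$. Splitting $\phi_i(t_i^-,x) = \phi_i(t_i^-,0)+[\phi_i(t_i^-,x)-\phi_i(t_i^-,0)]$ and applying $H(h)$ yields $\|\phi_i(t_i^-,x(t_i^-))\|_E\leq\|\phi_i(t_i^-,0)\|_E+h_i\|x(t_i^-)\|_E$; Lemma~\ref{lem:Relax-bounds} then produces the factor $\tfrac{M}{\Gamma(\lambda)}\sum_{i=1}^{k}h_i(t_i-t_{i-1})^{\lambda-1}$, which by the choice of $h_i$ in $H(h)$ keeps the effective contraction coefficient under control. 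Combining this with the estimate above and proceeding inductively over the subintervals (the value $\|x(t_i^-)\|_E$ at each jump point is already bounded by the estimate on the previous interval) gives a uniform constant $D_2$.

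The principal obstacle is the interaction between the singular kernel $(t-s)^{\mu-1}$ and the singular weight $(s-t_k)^{1-\lambda}$ hidden in the growth condition on $G$, which prevents a naive Gronwall argument. This is exactly the situation the weighted norm $\|\cdot\|_r$ in \eqref{eqn:Relax-radius} was designed for: using $\int_0^t(t-s)^{\mu-1}e^{rs}\,ds\leq r^{-\mu}e^{rt}\Gamma(\mu)$, the troublesome term becomes $ML_rT^{1-\lambda}r^{-\mu}e^{rt}\|x\|_r$, so choosing $r$ as in \eqref{eqn:Relax-Radius} forces its coefficient below $1$, and $D$ is then read off from the remaining constants exactly as $R=2\tau$ was obtained in Theorem~\ref{thm:Relax-exist original}.
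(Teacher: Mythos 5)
Your proposal is correct and follows essentially the same route as the paper: estimate the mild solution term by term using Lemma~\ref{lem:Relax-bounds}, bound the $\alpha_{r}$-integral by H\"older's inequality, and absorb the linear term in $\|x\|$ by a fractional Gronwall argument, treating $[0,t_{1}]$ and $(t_{k},t_{k+1}]$ separately. The only cosmetic differences are that the paper splits $G(s,x)v$ as $[G(s,x)v-G(s,0)v]+G(s,0)v$ so that the Lipschitz constant $L_{r}$ (rather than $\beta_{r}$) enters the resulting Mittag--Leffler constants $D_{1}=\tau E_{\mu}(MT^{1+\nu(\mu-1)}L_{r})$ and $D_{2}=\tfrac{\tau}{w}E_{\mu}(MT^{1+\nu(\mu-1)}L_{r})$, and that it handles the accumulated impulsive term by moving it to the left-hand side and dividing by $w=1-\tfrac{M}{\Gamma(\lambda)}\sum_{i=1}^{k}h_{i}(t_{i}-t_{i-1})^{\lambda-1}$ instead of your induction over subintervals.
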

\begin{proof}
Let $(x,v)$  be the state-control pair for the relaxed control system \eqref{eqn:Relax-relaxed sys}. For such a system, the trajectory is given by
\begin{align*}
x(t)=&S_{\mu,\nu}(t)x_{0}+\displaystyle \sum_{i=1}^{k}S_{\mu,\nu}(t-t_{i})\phi_{i}(t_{i}^{-},x(t_{i}^{-}))\\
  &\enspace+\int_{0}^{t}(t-s)^{\mu-1}P_{\mu}(t-s)G(s,x(s))v(s)ds, \enspace t\in [0,T],\enspace k=1,2,\ldots n.
\end{align*}
Firstly, the bound of the trajectory when $t\in (0,t_{1}]$, by applying Lemma \ref{lem:Relax-bounds} gives
\newline
$\displaystyle t^{1-\lambda}\|x(t)\|_{E}$
\begin{align*}
&\leq t^{1-\lambda}\|S_{\mu,\nu}(t)x_{0}\|+t^{1-\lambda}\int_{0}^{t}(t-s)^{\mu-1}\|P_{\mu}(t-s)\|\|G(s,x(s))\|\|v(s)\|ds\\
&\leq \dfrac{M\|x_{0}\|}{\Gamma(\lambda)}+\dfrac{MT^{1-\lambda}}{\Gamma(\mu)}\int_{0}^{t}(t-s)^{\mu-1}\|G(s,x(s))v(s)-G(s,0)v(s)\|ds\\
&\enspace +\dfrac{MT^{1-\lambda}}{\Gamma(\mu)}\int_{0}^{t}(t-s)^{\mu-1}\|G(s,0)v(s)\|ds.
\end{align*}
Making use of the equation \eqref{eqn:Relax-G0} and \eqref{eqn:Relax-G2-G1} and H$\ddot{o}$lder's inequality for $0<p<1$ leads to
\newline
\begin{align*}
t^{1-\lambda}\|x(t)\|& \leq \dfrac{M\|x_{0}\|}{\Gamma(\lambda)}+\dfrac{MT^{1-\lambda}L_{r}}{\Gamma(\mu)}\int_{0}^{t}(t-s)^{\mu-1}s^{1-\lambda}\|x(s)\|\|v(s)\|ds\\
&\quad +\dfrac{MT^{1-\lambda}}{\Gamma(\mu)}\int_{0}^{t}(t-s)^{\mu-1}\alpha_{r}(s)ds\\
\qquad \leq \dfrac{M\|x_{0}\|}{\Gamma(\lambda)}+&\dfrac{MT^{1-\lambda}L_{r}}{\Gamma(\mu)}\int_{0}^{t}(t-s)^{\mu-1}s^{1-\lambda}\|x(s)\|\|v(s)\|ds\\
&\quad +\dfrac{MT^{\nu(\mu-1)+1-\frac{1}{p}}}{\Gamma(\mu)}\left(\dfrac{p-1}{p\mu-1}\right)^{1-\frac{1}{p}}\|\alpha_{r}(s)\|_{L^{p}([0,T],\mathbb{R}^{+})}
\end{align*}
\newline
$\displaystyle \|x(t)\|_{PC_{1-\lambda}}$
\begin{align*}
&\leq \dfrac{M\|x_{0}\|_{E}}{\Gamma(\lambda)}+\dfrac{MT^{1-\lambda}L_{r}}{\Gamma(\mu)}\int_{0}^{t}(t-s)^{\mu-1}\|v(s)\|_{M(\Lambda)}
\|x(s)\|_{PC_{1-\lambda}}\|v(s)\|_{M(\Lambda)}ds\\
&\quad +\dfrac{MT^{\nu(\mu-1)+1-\frac{1}{p}}}{\Gamma(\mu)}\left(\dfrac{p-1}{p\mu-1}\right)^{1-\frac{1}{p}}\|\alpha_{r}(s)\|_{L^{p}([0,T],\mathbb{R}^{+})}.
\end{align*}
By Gronwall Inequality \cite{Gronwall-Inequality} a constant $D_{1}$ can be deduced, such that $\|x\|_{PC_{1-\lambda}}\leq D_{1}$. Here $D_{1}$
is estimated as
\begin{align*}
D_{1}=\tau E_{\mu}(MT^{1+\nu(\mu-1)}L_{r}),
\end{align*}
where the value of $ \tau$ is given by \eqref{eqn:Relax-tau}. Similarly, for $t\in (t_{k},t_{k+1}]$, $k=1,2,\ldots n$, substituting
$\left(1-\dfrac{M}{\Gamma(\lambda)}\sum_{i=1}^{k}h_{i}(t_{i}-t_{i-1})^{\lambda-1}\right)=w$,
gives,
\newline
$\displaystyle \|x(t)\|_{PC_{1-\lambda}}$
\begin{align*}
&\leq \dfrac{M\|x_{0}\|}{w\Gamma(\lambda)}
+\dfrac{MT^{\nu(\mu-1)+1-\frac{1}{p}}}{\Gamma(\mu)}\left(\dfrac{p-1}{p\mu-1}\right)^{1-\frac{1}{p}}\|\alpha_{r}(s)\|_{L^{p}([0,T],\mathbb{R}^{+})}\\
&\enspace + \dfrac{M}{w\Gamma(\lambda)}\displaystyle \sum_{i=1}^{k}\dfrac{\|\phi_{i}(t_{i},0)\|_{E}}{\Gamma(\lambda)}
+\dfrac{MT^{1-\lambda}L_{r}}{w\Gamma(\mu)}\int_{0}^{t}(t-s)^{\mu-1}\|x(s)\|_{PC_{1-\lambda}}\|v(s)\|_{M(\Lambda)}ds.
\end{align*}
Correspondingly, $\exists$ a constant $D_{2}$, such that $\|x(t)\|_{PC_{1-\lambda}}\leq D_{2}$, where
\begin{align*}
D_{2}=\dfrac{\tau}{w} E_{\mu}(MT^{1+\nu(\mu-1)}L_{r}) ,\enspace \mbox{where the value of $ \tau$ is given by \eqref{eqn:Relax-tau-impulsive} }.
\end{align*}
In conclusion, the trajectory of the relaxed system with state-control pair $(x,v)$ is bounded.
\end{proof}

\subsection{Approximation of trajectories}
The following theorem is vital in illustrating the relation between the original and relaxed trajectories.

\begin{theorem}
\label{thm:Relax-approx-traj}
Assume that  $H(g_{r})$ and $H(h)$ given by Hypothesis \eqref{hypo:Relax-hypo-origi} and \eqref{hypo:Relax-hypo-relax} holds true, then for every
trajectory $x(t,v)$ of \eqref{eqn:Relax-relaxed sys}, and for every $\epsilon>0$, there
exist a trajectory $x(t,u)$ of \eqref{eqn:Relax-Hilfer fractional differential equation}, that satisfy the relation,
\begin{align*}
\|x(t,u)-x(t,v)\|_{PC_{1-\lambda}}<\epsilon, \enspace t\in [0,T].
\end{align*}
\end{theorem}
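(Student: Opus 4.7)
The plan is to exploit the classical density of ordinary controls, viewed as Dirac measure-valued maps $\delta_{u(\cdot)}$, inside the relaxed control space $\mathcal{R}(J,\mathcal{M}(\Lambda))$ equipped with the weak$^*$ topology inherited from $L^{\infty}([0,T],C(\Lambda)^{*})$. Fix a relaxed trajectory $x(\cdot,v)$ with control $v\in U_r$. Since $\Lambda$ is a compact Polish space, a standard measurable selection / partitioning argument (Chattering lemma, cf.\ \cite{relax-ref-book}) yields a sequence $\{u_n\}\subset U_o$ such that $\delta_{u_n(\cdot)}\to v(\cdot)$ weak$^*$, i.e.
\begin{align*}
\int_{0}^{T}\!\!f(t,u_n(t))\,dt\;\longrightarrow\;\int_{0}^{T}\!\!dt\int_{\Lambda}f(t,\eta)\,v(t)(d\eta),\qquad \forall f\in L^{1}([0,T],C(\Lambda)).
\end{align*}
It then suffices to show that the corresponding trajectories $x_n:=x(\cdot,u_n)$ of the original system \eqref{eqn:Relax-Hilfer fractional differential equation} satisfy $x_n\to x(\cdot,v)$ in $PC_{1-\lambda}([0,T],E)$; choosing $n$ large furnishes the desired $x(\cdot,u)=x_n$.

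The convergence argument proceeds interval by interval. On $[0,t_1]$, writing the difference via the mild solution formula,
\begin{align*}
x_n(t)-x(t,v)=\int_{0}^{t}(t-s)^{\mu-1}P_{\mu}(t-s)\bigl[g(s,x_n(s),u_n(s))-G(s,x(s,v))v(s)\bigr]ds,
\end{align*}
I would split the integrand as
\begin{align*}
\bigl[g(s,x_n(s),u_n(s))-g(s,x(s,v),u_n(s))\bigr]\;+\;\bigl[g(s,x(s,v),u_n(s))-G(s,x(s,v))v(s)\bigr].
\end{align*}
The first bracket is controlled by the Lipschitz hypothesis $H(g_r)(3)$, producing a term proportional to $(s-t_k)^{1-\lambda}\|x_n(s)-x(s,v)\|_E$; this will eventually be absorbed via a Gronwall inequality of the type already invoked in Lemma \ref{lem:Relax-traj bound}. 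The second bracket, which encodes the weak$^*$ approximation, is handled by applying weak$^*$ convergence to the function $f(s,\eta)=g(s,x(s,v),\eta)\in L^{1}([0,T],C(\Lambda))$, whose regularity is guaranteed by the continuity in $\eta$ of $g$ and the established bound $\|x(\cdot,v)\|_{PC_{1-\lambda}}\le D$ from Lemma \ref{lem:Relax-traj bound}. After the first interval, the impulsive increments $S_{\mu,\nu}(t-t_i)\phi_i(t_i^-,x(t_i^-))$ enter; their closeness follows from Lipschitz continuity of $\phi_i$ (hypothesis $H(h)$) applied to the already-established convergence $x_n(t_i^-)\to x(t_i^-,v)$.

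The principal obstacle is that weak$^*$ convergence of the controls by itself only gives weak convergence of the integrand against test functions, not the pointwise or strong convergence needed to pass to the limit inside the Banach-space-valued integral. This is resolved by exploiting the compactness of the $C_0$-semigroup $Q(t)$, which transfers (through the defining series) to compactness of $P_\mu(t)$ for $t>0$. Thus the operator $f\mapsto\int_{0}^{t}(t-s)^{\mu-1}P_\mu(t-s)f(s)ds$ is a compact linear map, upgrading the weak convergence of the second bracket to strong convergence in $E$. Combined with the Lipschitz-plus-Gronwall control on the first bracket and an induction across the impulsive nodes $0<t_1<\cdots<t_n<T$, one obtains $\sup_{t\in(t_k,t_{k+1}]}(t-t_k)^{1-\lambda}\|x_n(t)-x(t,v)\|_E\to 0$ for each $k$, hence $\|x_n-x(\cdot,v)\|_{PC_{1-\lambda}}\to 0$.

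A secondary technical difficulty is that the weighted factor $(t-t_k)^{1-\lambda}$ may blow up near the impulse points and must be handled uniformly; the same $\nu(\mu-1)+1-1/p$ H\"older estimates used in Theorem \ref{thm:Relax-exist original} absorb this weight provided $p>1/\lambda$, which is precisely the integrability condition assumed in $H(g_r)(2)$. Once $\epsilon>0$ is given, choosing $n$ large enough that $\|x_n-x(\cdot,v)\|_{PC_{1-\lambda}}<\epsilon$ completes the argument with $u=u_n$.
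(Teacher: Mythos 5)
Your proposal is correct and follows essentially the same route as the paper: density of Dirac-measure controls in $\mathcal{R}(J,\mathcal{M}(\Lambda))$, the same two-term splitting of the integrand into a Lipschitz part absorbed by a Gronwall inequality and a residual term $\rho_n$ driven by weak$^*$ convergence, and the same use of compactness of the fractional solution operator (the paper proves compactness of $\mathcal{F}(\varrho)(t)=\int_0^t(t-s)^{\mu-1}P_\mu(t-s)\varrho(s)\,ds$ via equicontinuity and Arzel\`a--Ascoli, then passes to the limit by pairing with $\hat\sigma\in E^*$ against $\zeta_t(s,\eta)\in L^1([0,T],C(\Lambda))$, which is the precise form of your ``test against $f(s,\eta)=g(s,x(s,v),\eta)$'' step). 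The impulsive intervals are likewise handled by the Lipschitz hypothesis $H(h)$ and the factor $w=1-\tfrac{M}{\Gamma(\lambda)}\sum_i h_i(t_i-t_{i-1})^{\lambda-1}$, matching your induction across the nodes.
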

The following Lemma assists in the proof of Theorem \ref{thm:Relax-approx-traj}.
\begin{lemma}\rm{\cite{Warga-book}}
Suppose $\Lambda$ is a compact metric space. Then $\mathcal{R}(J,\mathcal{M}(\Lambda))$ is convex and sequentially compact.
\end{lemma}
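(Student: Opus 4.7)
The plan is to treat convexity and sequential compactness separately, with the latter leaning on the duality $L^{1}([0,T],C(\Lambda))^{*}\cong L^{\infty}([0,T],C(\Lambda)^{*})$ that the paper has already recorded. Convexity is essentially immediate: given $v_{1},v_{2}\in\mathcal{R}(J,\mathcal{M}(\Lambda))$ and $\alpha\in[0,1]$, the pointwise combination $t\mapsto\alpha v_{1}(t)+(1-\alpha)v_{2}(t)$ lands in $\mathcal{M}(\Lambda)$ because $\mathcal{M}(\Lambda)$, as the set of probability measures on $\Lambda$, is convex; measurability in the weak$^{*}$ sense (i.e., measurability of $t\mapsto\int_{\Lambda}f(\eta)[\alpha v_{1}(t)+(1-\alpha)v_{2}(t)](d\eta)$ for every $f\in C(\Lambda)$) is inherited by linearity from the measurability of $v_{1}$ and $v_{2}$.

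For sequential compactness, I would exploit that $\Lambda$ is a compact metric space, so $C(\Lambda)$ is separable, hence $L^{1}([0,T],C(\Lambda))$ is separable as well. The predual being separable, the closed unit ball of its dual $L^{\infty}([0,T],C(\Lambda)^{*})$ is weak$^{*}$-compact by Banach--Alaoglu and, being separable-predual, also weak$^{*}$-metrizable on bounded sets, hence weak$^{*}$-sequentially compact. Since every $v\in\mathcal{R}(J,\mathcal{M}(\Lambda))$ satisfies $\|v(t)\|_{\mathcal{M}(\Lambda)}=1$ a.e., the set $\mathcal{R}(J,\mathcal{M}(\Lambda))$ sits inside this unit ball, so any sequence $\{v_{n}\}\subset\mathcal{R}(J,\mathcal{M}(\Lambda))$ has a weak$^{*}$-convergent subsequence $v_{n_{k}}\rightharpoonup^{*}v$ in $L^{\infty}([0,T],C(\Lambda)^{*})$, meaning
\[
\int_{0}^{T}\!\!\int_{\Lambda}g(t,\eta)\,v_{n_{k}}(t)(d\eta)\,dt\;\longrightarrow\;\int_{0}^{T}\!\!\int_{\Lambda}g(t,\eta)\,v(t)(d\eta)\,dt
\]
for every $g\in L^{1}([0,T],C(\Lambda))$.

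The main obstacle, and the step I expect to require the most care, is verifying that the weak$^{*}$ limit $v$ actually lives in $\mathcal{R}(J,\mathcal{M}(\Lambda))$, i.e.\ that $v(t)$ is a probability measure for a.e.\ $t\in[0,T]$ rather than merely a finite signed measure of norm at most $1$. Positivity would be established by testing against simple tensors $\mathbf{1}_{B}(t)f(\eta)$ with $B\subset[0,T]$ Borel and $f\in C(\Lambda)$ nonnegative: passing to the limit preserves the nonnegativity $\int_{B}\!\int_{\Lambda}f\,v_{n_{k}}(t)(d\eta)\,dt\geq 0$, and since $B$ is arbitrary, $\int_{\Lambda}f\,v(t)(d\eta)\geq 0$ a.e., which upgrades to $v(t)\geq 0$ a.e.\ by separability of $C(\Lambda)$ (choose a countable dense cone of nonnegative $f$'s). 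The total-mass condition $v(t)(\Lambda)=1$ a.e.\ follows the same way by testing with $\mathbf{1}_{B}(t)\cdot\mathbf{1}_{\Lambda}(\eta)$ and using $v_{n_{k}}(t)(\Lambda)=1$. Combined with the weak$^{*}$ limit identity, which automatically gives measurability of $t\mapsto\int_{\Lambda}f\,v(t)(d\eta)$ for every $f\in C(\Lambda)$, this places $v$ in $\mathcal{R}(J,\mathcal{M}(\Lambda))$ and completes the sequential-compactness proof.
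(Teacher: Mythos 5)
Your proof is correct, but note that the paper offers no proof of this lemma at all: it is quoted verbatim from Warga's book \cite{Warga-book} and used as a black box, so there is no in-paper argument to compare against. Your route is the standard one and is sound: convexity from the pointwise convexity of $\mathcal{M}(\Lambda)$ plus linearity of weak$^{*}$ measurability; sequential compactness from separability of $C(\Lambda)$ (hence of $L^{1}([0,T],C(\Lambda))$), Banach--Alaoglu with weak$^{*}$ metrizability of the unit ball, and the closedness argument showing the limit is a.e.\ a probability measure by testing against $\mathbf{1}_{B}\otimes f$ with $f\geq 0$ and against $\mathbf{1}_{B}\otimes\mathbf{1}_{\Lambda}$, using a countable dense family to get a single null set. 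The only point worth flagging is that the identification $L^{1}([0,T],C(\Lambda))^{*}\cong L^{\infty}([0,T],C(\Lambda)^{*})$ requires the $L^{\infty}$ space to consist of weak$^{*}$-measurable (not Bochner-measurable) functions, since $\mathcal{M}(\Lambda)$ lacks the Radon--Nikodym property for uncountable $\Lambda$; your argument only ever uses weak$^{*}$ measurability (which is exactly how the paper defines $\mathcal{R}(J,\mathcal{M}(\Lambda))$), so this is a matter of phrasing rather than a gap.
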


It can be deduced from the above lemma that the considered relaxed control set $U_{r}=\mathcal{R}(J,\mathcal{M}(\Lambda))$, which is the set of
probability measure on $[0,T]$ is a convex set. In view of the fact that the Dirac measure are the extreme points of probability measure, $U_{o}$ can
be embedded into $\mathcal{R}(J,\mathcal{M}(\Lambda))$ by associating each $u(\cdot)\in U_{o}$ with the Dirac measure valued function
$\delta_{u(\cdot)}\in \mathcal{R}(J,\mathcal{M}(\Lambda))$. In addition, $U_{o}$ is dense in $\mathcal{R}(J,\mathcal{M}(\Lambda))$ which means, for any
$v(\cdot) \in \mathcal{R}([0,T],\mathcal{M}(\Lambda)), \exists $ a sequence $u_{n}\in U_{o}$ such that
\begin{align*}
 \delta_{u_{n}(\cdot)}\rightarrow v(\cdot) \enspace \mbox{in} \enspace \mathcal{R}([0,T],\mathcal{M}(\Lambda)).
 \end{align*}

\begin{proof}[Proof of Theorem \ref{thm:Relax-approx-traj}]-
With the understanding of the above theory, for a sequence $\{u_{n}\}\subseteq U_{o}, \exists v\in U_{r}$ such that $u_{n}\xrightarrow{w}v$. Let
$x(\cdot,u_{n})$ be the trajectory of the original system corresponding to $u_{n}$ and let $x(\cdot, v)$ be the trajectory of the relaxed system
corresponding to $v$. Further,
\begin{align*}
x(t,u_{n})= S_{\mu,\nu}(t)&x_{0}+\displaystyle \sum_{i=1}^{k}S_{\mu,\nu}(t-t_{i})\phi_{i}(t_{i}^{-},x_{n}(t_{i}^{-}))\\
&+\int_{0}^{t}(t-s)^{\mu-1}P_{\mu}(t-s)g(s,x_{n}(s),u_{n}(s))ds.
\end{align*}
Using \eqref{eqn:Relax-relax function}, it can be written as,
\begin{align*}
x(t,u_{n})=&S_{\mu,\nu}(t)x_{0}+\displaystyle \sum_{i=1}^{k}S_{\mu,\nu}(t-t_{i})\phi_{i}(t_{i}^{-},x_{n}(t_{i}^{-}))\\
&+\int_{0}^{t}(t-s)^{\mu-1}P_{\mu}(t-s)\left[\int_{\Lambda}g(s,x_{n}(s),\eta)\delta_{u_{n}} d\eta\right]ds.
\end{align*}
Likewise, it is easy to obtain
\begin{align*}
x(t,v)=&S_{\mu,\nu}(t)x_{0}+\displaystyle \sum_{i=1}^{k}S_{\mu,\nu}(t-t_{i})\phi_{i}(t_{i}^{-},x(t_{i}^{-}))\\
&+\int_{0}^{t}(t-s)^{\mu-1}P_{\mu}(t-s)\left[\int_{\Lambda}g(s,x(s),\eta)v(s) d\eta\right]ds.
\end{align*}
Now, computing $x(t,u_{n})-x(t,v)$ gives
\begin{align*}
&x(t,u_{n})- x(t,v)=\displaystyle \sum_{i=1}^{k}S_{\mu,\nu}(t-t_{i})\left[\phi_{i}(t_{i}^{-},x_{n}(t_{i}^{-}))-\phi_{i}(t_{i}^{-},x(t_{i}^{-}))\right]\\
 & +\int_{0}^{t}(t-s)^{\mu-1}P_{\mu}(t-s)\left[\int_{\Lambda}g(s,x_{n}(s),\eta)\delta_{u_{n}}d\eta-\int_{\Lambda}g(s,x(s),\eta)v(s) d\eta\right] ds.
 \end{align*}
 As it is an impulsive system, two cases arise. For instance $t\in (0,t_{1}]$,
 \begin{align*}
 &t^{1-\lambda}\|x(t,u_{n})-x(t,v)\|_{E}\\
 &\leq t^{1-\lambda}\int_{0}^{t}(t-s)^{\mu-1}\|P_{\mu}(t-s)\|
 \left[\int_{\Lambda}\left(\dfrac{}{}\|g(s,x(s),\eta)\|\|(\delta_{u_{n}}-v(s))\|\right) d\eta\right] ds\\
 &+t^{1-\lambda}\int_{0}^{t}(t-s)^{\mu-1}\|P_{\mu}(t-s)\|
 \left[\int_{\Lambda}\left(\|\dfrac{}{}g(s,x_{n}(s),\eta)-g(s,x(s),\eta)\|\|\delta_{u_{n}}\|\right) d\eta\right] ds.
 \end{align*}
Let,
\begin{align}
\label{eqn:Relax-rho}
\rho_{n}(t)=\int_{0}^{t}(t-s)^{\mu-1}P_{\mu}(t-s)\left[\int_{\Lambda}\left(\dfrac{}{}g(s,x(s),\eta)(\delta_{u_{n}}-v(s))\right) d\eta\right] ds.
\end{align}
Applying Lemma \ref{lem:Relax-bounds}, Hypothesis $H(h)$ and $H(g_{r})$ gives
\newline
$\displaystyle \|x(t,u_{n})-x(t,v)\|_{PC_{1-\lambda}}$
\begin{align*}
&\leq T^{1-\lambda}\|\rho_{n}(t)\|
+\dfrac{MT^{1-\lambda}L_{r}}{\Gamma(\mu)}\int_{0}^{t}(t-s)^{\mu-1}\|x(t,u_{n})-x(t,v)\|_{PC_{1-\lambda}}ds.
\end{align*}
By Gronwall Inequality \cite{Gronwall-Inequality}, it is easy to see that
\begin{align}
\label{eqn:Relax-Gronwall-1}
\|x(t,u_{n})-x(t,v)\|_{PC_{1-\lambda}}&\leq T^{1-\lambda}\|\rho_{n}(t)\|E_{\mu}(ML_{r}T^{1+\nu(\mu-1)}).
\end{align}
On the other hand, for $t\in (t_{k},t_{k+1}]$,
 \begin{align*}
& (t-t_{k})^{1-\lambda}\|x(t,u_{n})-x(t,v)\|_{E}\\
 & \leq(t-t_{k})^{1-\lambda}\displaystyle \sum_{i=1}^{k}\|S_{\mu,\nu}(t-t_{i})\|
 \left\|\left[\phi_{i}(t_{i}^{-},x_{n}(t_{i}^{-}))-\phi_{i}(t_{i}^{-},x(t_{i}^{-}))\right]\right\|\\
  &+(t-t_{k})^{1-\lambda}\int_{0}^{t}(t-s)^{\mu-1}\|P_{\mu}(t-s)\|\left[\int_{\Lambda}\left(\dfrac{}{}\|g(s,x(s),\eta)\|\|(\delta_{u_{n}}-v(s))\|\right) d\eta\right] ds\\
 &+(t-t_{k})^{1-\lambda}\int_{0}^{t}(t-s)^{\mu-1}\left\|P_{\mu}(t-s)\right\|\\
 &\qquad\left[\int_{\Lambda}\left(\dfrac{}{}\|g(s,x_{n}(s),\eta)-g(s,x(s),\eta)\|\|\delta_{u_{n}}\|\right) d\eta\right] ds.
  \end{align*}
Using the Lemma \ref{lem:Relax-bounds}, Hypothesis $H(h)$ and $H(g_{r})$ gives
\newline
$\displaystyle (t-t_{k})^{1-\lambda}\|x(t,u_{n})-x(t,v)\|_{E}$
\begin{align*}
&\leq\dfrac{M}{\Gamma(\lambda)}\sum_{i=1}^{k}h_{i}(t_{i}-t_{i-1})^{\lambda-1}(t_{i}-t_{i-1})^{1-\lambda}\|x_{n}(t_{i}^{-})-x(t_{i}^{-})\|\\
&+(t-t_{k})^{1-\lambda}\|\rho_{n}(t)\| +\dfrac{MT^{1-\lambda}L_{r}}{\Gamma(\mu)}\int_{0}^{t}(t-s)^{\mu-1}(s-t_{k})^{1-\lambda}\|x(t,u_{n})-x(t,v)\|ds.
\end{align*}
By \eqref{eqn:Relax-PC condition}, it now follows that,
\newline
$\displaystyle \|x(t,u_{n})-x(t,v)\|_{PC_{1-\lambda}}$
\begin{align*}
&\leq\dfrac{M}{\Gamma(\lambda)}\sum_{i=1}^{k}h_{i}(t_{i}-t_{i-1})^{\lambda-1}
\|x_{n}(t_{i}^{-})-x(t_{i})\|_{PC_{1-\lambda}}+(t-t_{k})^{1-\lambda}\|\rho_{n}(t)\|\\
&\quad+\dfrac{MT^{1-\lambda}L_{r}}{\Gamma(\mu)}\int_{0}^{t}(t-s)^{\mu-1}\|x(t,u_{n})-x(t,v)\|_{PC_{1-\lambda}}ds.
\end{align*}
Substituting $\left(1-\dfrac{M}{\Gamma(\lambda)}\sum_{i=1}^{k}h_{i}(t_{i}-t_{i-1})^{\lambda-1}\right)=w, \enspace k=1,2,\ldots n$, reduces to
\newline
$\displaystyle \|x(t,u_{n})-x(t,v)\|_{PC_{1-\lambda}}$
\begin{align*}
&\leq \dfrac{T^{1-\lambda}\|\rho_{n}(t)\|}{w}
+\dfrac{MT^{1-\lambda}L_{r}}{\Gamma(\mu)w}\int_{0}^{t}(t-s)^{\mu-1}\|x(t,u_{n})-x(t,v)\|_{PC_{1-\lambda}}ds.
\end{align*}
By Gronwall Inequality,
\begin{align}
\label{eqn:Relax-Gronwall-2}
\|x(t,u_{n})-x(t,v)\|_{PC_{1-\lambda}}&\leq \dfrac{T^{1-\lambda}\|\rho_{n}(t)\|}{w}E_{\mu}\left(\dfrac{ML_{r}T^{1+\nu(\mu-1)}}{w}\right).
\end{align}
Now taking $\rho_{n}(t)$ into consideration. From \eqref{eqn:Relax-rho}
\begin{align*}
\rho_{n}(t)=\int_{0}^{t}(t-s)^{\mu-1}P_{\mu}(t-s)\varrho_{n}(s) ds,
\end{align*}
where $\varrho_{n}(s)=\int_{\Lambda}\left(\dfrac{}{}g(s,x(s),\eta)(\delta_{u_{n}}-v(s))\right) d\eta,$
then,
\begin{align*}
\|\varrho_{n}(s)\|\leq &\int_{\Lambda}\|g(s,x(s),\eta)\|_{E}\|\delta_{u_{n}}-v(s)\| (d\eta),\enspace \eta\in \Lambda\\
\leq &\alpha_{r}(s)+\beta_{r}(s-t_{k})^{1-\lambda}\|x(s)\|_{E}\|\delta_{u_{n}}-v(s)\|_{\mathcal{M}(\Lambda)}\\
\leq &2 \left(\alpha_{r}(s)+\beta_{r}\|x(s)\|_{PC_{1-\lambda}}\right).
\end{align*}
As the solution of the relaxed system $(P_{r})$ is bounded, $\{\varrho_{n}(s)\}$ is bounded in $L^{p}([0,T],E)$, for some $p>\dfrac{1}{\lambda}$. Consequently,
there exists a subsequence $\{\varrho_{n_{k}}(\cdot)\}_{k\geq1}$ such that $\varrho_{n_{k}}(\cdot)\xrightarrow{w}\varrho(\cdot)$ in $L^{p}([0,T],E)$. An operator can be defined
as $\mathcal{F}:L^{p}([0,T],E)\rightarrow C([0,T],E)$ characterized by
\begin{align*}
\mathcal{F}(\varrho)(\cdot):=\int_{0}^{t}(t-s)^{\mu-1}P_{\mu}(t-s) \varrho(s)ds.
\end{align*}
To prove that $\rho_{n}(t)\rightarrow\rho(t)$ in $C([0,T],E)$, that is
\begin{align*}
\int_{0}^{t}(t-s)^{\mu-1}P_{\mu}(t-s) \varrho_{n}(s)ds\rightarrow\int_{0}^{t}(t-s)^{\mu-1}P_{\mu}(t-s) \varrho(s)ds
\end{align*}
in $C([0,T],E)$, it is necessary that the operator $\mathcal{F}$ be a compact operator. Based on the claim (2-3) of step 4 of
\cite[Theorem 3.1]{Hilfer-impulsive}, wherein, first equicontinuous of $\mathcal{F}$ is proved and then by Arzel$\acute{a}$-Ascoli theorem it can be verified that
 $\mathcal{F}$ is a relatively compact in $C([0,T],E)$.\\
 For ${\hat{\sigma}}\in E^{*}$ and $\rho_{n}(t) \in E$ for $t\in [0,T]$, the duality pairing can be given by,
 \begin{align*}
 \hat{\sigma}(\rho_{n}(t))=&\int_{J}ds\int_{\Lambda}(t-s)^{\mu-1}P_{\mu}(t-s)g(s,x(s),\eta)(\delta_{u_{n}}-v(s))\hat{\sigma} d\eta\\
 =&\int_{J}ds\int_{\Lambda}\zeta_{t}(s,\eta)(\delta_{u_{n}}-v(s))d\eta,
  \end{align*}
 where $\zeta_{t}(s,\eta)=(t-s)^{\mu-1}P_{\mu}(t-s)g(s,x(s),\eta)\hat{\sigma}$. If $\zeta_{t}(s,\eta)$ is bounded and continuous with respect
 to the second variable, then $\zeta_{t}(s,\eta) \in L^{1}([0,T],C(\Lambda))$.
 Since $\delta_{u_{n}}(\cdot)\xrightarrow{w} v(\cdot)$, in $\mathcal{R}(J,\Lambda)$, from \eqref{eqn:Relax-weak-cdn} it can proved that
\begin{align*}
 \int_{J}ds\int_{\Lambda}\zeta_{t}(s,\eta)(\delta_{u_{n}}-v(s))d\eta \rightarrow 0 \enspace \mbox{as} \enspace n\rightarrow\infty.
\end{align*}
This indicates, $\hat{\sigma}(\rho_{n}(t))\rightarrow 0,\enspace \forall \hat{\sigma}\in E^{*}$ and implies $\rho_{n}(.)\rightarrow 0$ as
$n\rightarrow\infty$. In conclusion, in accordance with \eqref{eqn:Relax-Gronwall-1}
and \eqref{eqn:Relax-Gronwall-2}, $\|x(t,u_{n})-x(t,v)\|_{PC_{1-\lambda}} \rightarrow 0$ . Now it is remaining to prove $\zeta_{t}(s,\eta)$ is bounded and
continuous. Thus further analyzing leads to the following.
 \begin{enumerate}[(1)]
 \item
 From  $H(g_{r})$ of the Hypothesis \ref{hypo:Relax-hypo-relax}, it can be concluded that $\zeta_{t}(s,\eta)$ is continuous with the second variable for
 $\eta$ except for the impulsive
 points.
 \item
 For the bound,
 \begin{align*}
 \|\zeta_{t}(s,\eta)\|=&\|(t-s)^{\mu-1}P_{\mu}(t-s)(g(s,x(s),\eta)\hat{\sigma}\|\\
 \leq & \dfrac{M}{\Gamma(\mu)}(t-s)^{\mu-1}\left(\dfrac{}{}\alpha_{r}(s)+\beta_{r}(s-t_{k})^{1-\lambda}\|x(s)\|_{E}\|\hat{\sigma}\|\right)\\
 \leq & \dfrac{M}{\Gamma(\mu)}(t-s)^{\mu-1}\left(\dfrac{}{}\alpha_{r}(s)+\beta_{r}D\right).
  \end{align*}
 \end{enumerate}
This completes the proof.
\end{proof}
\subsection{Existence and comparison of optimal control for relaxed system-relaxation theorem}

This subsection sets out to prove the existence of the optimal relaxed control, that is, to find a control
$v_{r} \in U_{r}=\mathcal{R}(J,\mathcal{M}(\Lambda))$ such that
$$\mathcal{J}(v_{r}):=\displaystyle\inf_{(x(\cdot),v(\cdot))}\{\mathcal{J}(v), v\in U_{r}\}=m_{r},$$ where
\begin{align*}
\mathcal{J}(v):=\int_{0}^{T}dt \int_{\Lambda}\mathcal{L}(t,x(t),\eta(t))v(t)d\eta,\enspace \eta \in \Lambda.
\end{align*}
and to prove $m_{o}=m_{r}$. The following hypothesis on the integrand is considered to prove the existence of optimal control for the relaxed system.

\begin{hypothesis}{${}$}
\label{hypo:Relax-semicontinuous}
\begin{enumerate}[align=left]
\item [$H(l)$]- For the integrand $\mathcal{L}(t,x,\eta):J\times E\times \Lambda\rightarrow \bar{\mathbb{R}}=\mathbb{R}\cup \infty$,
\begin{enumerate}[\rm(1)]
\item
$t \mapsto \mathcal{L}(t,x,\eta)$ is measurable for all $(x,\eta) \in E\times \Lambda$.
\item
$(x,\eta)\mapsto \mathcal{L}(t,x,\eta)$ is lower semicontinuous.
 \item
$\|\mathcal{L}(t,x,\eta)\|\leq \mathfrak{a}_{r}(t)$ for almost all $t\in J$, for $\|x\|\leq r$, $\eta \in \Lambda$ and $\mathfrak{a}_{r}\in L^{1}(J)$.
\end{enumerate}
\end{enumerate}
\end{hypothesis}

If such an optimal control exists for the relaxed system, then to validate the relaxation theory, it has to be proved that the optimal control
of the relaxed system is nothing but the limit of the minimizing sequence of the original system with reference to the trajectory and control. To prove the relaxation theorem, a much stronger hypothesis is required. A modified hypothesis of $H(l)$ is given below:
\begin{hypothesis}{${}$}
\label{hypo:Relax-continuous}
\begin{enumerate}[align=left]
\item [$H(L)$]-
 For the integrand $\mathcal{L}(t,x,\eta):J\times E\times \Lambda\rightarrow \mathbb{R}$,
\begin{enumerate}[\rm(1)]
\item
$t \mapsto \mathcal{L}(t,x,\eta)$ is measurable for all $(x,\eta) \in E\times \Lambda$.
\item
$(x,\eta)\mapsto \mathcal{L}(t,x,\eta)$ is continuous.
 \item
$\|\mathcal{L}(t,x,\eta)\|\leq \mathfrak{b}_{r}(t)$ for almost all $t\in J$, for $\|x\|\leq r$, $\eta \in \Lambda$ and $\mathfrak{b}_{r}\in L^{1}(J)$.
\end{enumerate}
\end{enumerate}
\end{hypothesis}

\begin{theorem}
\label{thm:Relax-relaxation}
For the compact space $\Lambda$, assuming $H(h)$ and $H(g_{r})$ of Hypothesis \ref{hypo:Relax-hypo-origi} and Hypothesis \ref{hypo:Relax-hypo-relax}, along
with the Hypothesis $H(l)$ and $H(L)$, respectively, the subsequent two properties hold:-
\begin{enumerate}[{\rm(1)}]
\item
For the relaxed system $P_{r}$, there exists an optimal state-control pair $(x,v_{r})$  such that $\mathcal{J}(v_{r})=m_{r}$.
\item
$m_{o}=m_{r}$.
\end{enumerate}
\end{theorem}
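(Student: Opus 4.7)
The plan is to split the proof into the two claims and use the machinery already built in the paper: sequential compactness of $U_r$ (Warga's lemma), the uniform trajectory bound of Lemma~\ref{lem:Relax-traj bound}, and the approximation result Theorem~\ref{thm:Relax-approx-traj}. The overall skeleton is: for (1), lower semicontinuity plus compactness gives an optimum; for (2), the easy inequality $m_r \le m_o$ is immediate from the embedding $U_o \hookrightarrow U_r$ via Dirac measures, and the reverse inequality $m_o \le m_r$ is proved by approximating the optimal relaxed control by admissible ordinary controls and passing to the limit in the cost.

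For part (1), I would take a minimizing sequence $\{v_n\} \subset U_r$ with $\mathcal{J}(v_n) \to m_r$. By the Warga lemma cited in the excerpt, $U_r = \mathcal{R}(J,\mathcal{M}(\Lambda))$ is sequentially compact, so a subsequence (not relabelled) satisfies $v_n \xrightarrow{w^\ast} v_r$ in $U_r$. Let $x_n := x(\cdot,v_n)$ and $x_r := x(\cdot,v_r)$ be the corresponding mild trajectories, which exist and are uniformly bounded in $PC_{1-\lambda}$ by Lemma~\ref{lem:Relax-traj bound}. Repeating the Gronwall argument used in Theorem~\ref{thm:Relax-approx-traj}, but now with the perturbation $\rho_n(t) = \int_0^t (t-s)^{\mu-1} P_\mu(t-s)\bigl[\int_\Lambda g(s,x_r(s),\eta)(v_n(s)-v_r(s))d\eta\bigr]ds$, and invoking the same compactness of the operator $\mathcal{F}$ together with the $w^\ast$-convergence of $v_n$ against the test function $\zeta_t(s,\eta) = (t-s)^{\mu-1}P_\mu(t-s)g(s,x_r(s),\eta)\hat{\sigma}$, I obtain $x_n \to x_r$ in $PC_{1-\lambda}([0,T],E)$. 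Then, using $H(l)$: the integrand $\mathcal{L}(t,\cdot,\cdot)$ is lower semicontinuous in $(x,\eta)$ and dominated by $\mathfrak{a}_r \in L^1(J)$ on the bounded set where the $x_n$ lie, so Fatou's lemma (applied to the product measure $dt \otimes v_n$, with a standard lower-semicontinuity-of-integral-functionals argument under $w^\ast$-convergence of measures) yields $\mathcal{J}(v_r) \le \liminf_n \mathcal{J}(v_n) = m_r$. Combined with $\mathcal{J}(v_r) \ge m_r$ by definition, this gives optimality.

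For part (2), the inequality $m_r \le m_o$ is immediate: every $u \in U_o$ embeds as $\delta_{u(\cdot)} \in U_r$ with the same trajectory and same cost, so the infimum over the larger set is no larger. For the reverse, let $v_r$ be the relaxed optimum from part~(1). By the density statement recalled after the Warga lemma, choose $\{u_n\} \subset U_o$ with $\delta_{u_n} \xrightarrow{w^\ast} v_r$ in $U_r$. Theorem~\ref{thm:Relax-approx-traj} applied to this sequence gives $x(\cdot,u_n) \to x(\cdot,v_r)$ in $PC_{1-\lambda}$. Under the stronger hypothesis $H(L)$, the integrand is continuous in $(x,\eta)$ and uniformly bounded by $\mathfrak{b}_r \in L^1(J)$ along the sequence, so the functional $\mathcal{J}$ is continuous under the joint convergence $(x_n,\delta_{u_n}) \to (x_r,v_r)$; dominated convergence applied to the decomposition
\begin{align*}
\mathcal{J}(u_n) - \mathcal{J}(v_r) = \int_0^T\!\!dt\!\int_\Lambda\!\bigl[\mathcal{L}(t,x_n(t),\eta)-\mathcal{L}(t,x_r(t),\eta)\bigr]\delta_{u_n}(d\eta) + \int_0^T\!\!dt\!\int_\Lambda\!\mathcal{L}(t,x_r(t),\eta)\bigl(\delta_{u_n}-v_r(s)\bigr)(d\eta)
\end{align*}
yields $\mathcal{J}(u_n) \to \mathcal{J}(v_r) = m_r$. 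Since $\mathcal{J}(u_n) \ge m_o$, passing to the limit gives $m_r \ge m_o$, so equality holds.

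The main obstacle I anticipate is justifying the two limit passages under the $w^\ast$-convergence of the measure-valued controls: in part (1), the lower semicontinuity of $\mathcal{J}$ under simultaneous convergence of $x_n \to x_r$ (strong in $PC_{1-\lambda}$) and $v_n \to v_r$ (only weak-$\ast$), and in part (2), the analogous continuity statement under $H(L)$. These require careful use of the fact that $\mathcal{L}(t,x_r(t),\cdot) \in C(\Lambda)$ (for fixed $t$) belongs to $L^1([0,T],C(\Lambda))$ so that the duality pairing with $\delta_{u_n} - v_r \in \mathcal{R}(J,\mathcal{M}(\Lambda))$ vanishes in the limit, combined with the integrability domination coming from Lemma~\ref{lem:Relax-traj bound} applied to the uniform trajectory bound $D$.
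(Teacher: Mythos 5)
Your proposal is correct and follows essentially the same route as the paper: sequential compactness of $U_{r}$ plus lower semicontinuity of the cost for part (1), and the Dirac-measure density, Theorem \ref{thm:Relax-approx-traj}, and continuity of the integrand under $H(L)$ for part (2). The only presentational differences are that the paper justifies the lower semicontinuity step via an increasing sequence of Caratheodory integrands $\mathcal{L}_{l}\uparrow\mathcal{L}$ (following Pongchalee et al.) while you invoke the equivalent standard result directly, and that you are actually more explicit than the paper about the trajectory convergence $x_{n}\rightarrow x_{r}$ needed in part (1).
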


\begin{proof}
\begin{enumerate}[(1)]
\item
Let $\{v_{n}\}$ be a minimizing of $U_{r}$. Since $U_{r}$ is sequentially compact,
\begin{align*}
v_{n}\xrightarrow{w} v_{r} \enspace \mbox{as} \enspace n\rightarrow \infty.
\end{align*}
This state-control pair $(x,v_{r})$
can be claimed to be the optimal relaxed pair. Following as in \cite[Theorem 11]{base-1}, the integrand is considered to be measurable with respect to the first variable and continuous with respect to the second variable, which is otherwise called the Caratheodory integrand. Since the limit of such an increasing sequence of Caratheodory integrand is lower semicontinuous, there exists such an increasing sequence
 $\{\mathcal{L}_{l}\}$ of Caratheodory integrand.
Precisely, as $l\rightarrow \infty$, on each subinterval of $[0,T]$,
\begin{align*}
\mathcal{L}_{l}(t,x(t),\eta(t))\uparrow \mathcal{L}(t,x(t),\eta(t)),\enspace \forall \eta\in \Lambda.
\end{align*}
As same in \cite[Theorem 12]{base-1}, it is simple to prove that
$ \mathcal{J}(v_{r})\leq m_{r}$. But according to the definition of $m_{r}$ \eqref{eqn:Relax-mini-relax}, $\mathcal{J}(v_{r})\geq m_{r}$, which
proves the existence of optimal state-control pair $(x,v_{r})$, such that $ \mathcal{J}(v_{r})= m_{r}$.
\item
With the known fact mentioned early, that  $U_{o}\subseteq U_{r}$, a conclusion can be drawn as
\begin{align}
\label{eqn:Relax-comparison-1}
 m_{r}\leq m_{o}.
\end{align}
 So, if the opposite inequality is proved, then the proof is completed. From Theorem \ref{thm:Relax-approx-traj}, it is evident that as $n\rightarrow\infty$,
 \begin{align*}
 x_{n}\rightarrow x \enspace\mbox{in}\enspace PC_{1-\lambda} \enspace\mbox{and} \enspace\delta_{u_{n}}\xrightarrow{w} v_{r} \enspace \mbox{in} \enspace \mathcal{R}(J,\mathcal{M}(\Lambda)).
 \end{align*}
 Now, applying the Hypothesis $H(L)$,
 \begin{align*}
 \|\mathcal{J}(\delta_{u_{n}})-\mathcal{J}(v_{r})\| \leq \int_{0}^{T}dt\int_{\Lambda}\|\mathcal{L}(\cdot,x_{n}(\cdot),\eta)\delta_{u_{n}}-\mathcal{L}(\cdot,x(\cdot),\eta)v_{r}\|d\eta.
 \end{align*}
As, $\|\mathcal{L}(\cdot,x_{n}(\cdot),\eta)\delta_{u_{n}}-\mathcal{L}(\cdot,x(\cdot),\eta)v_{r}\| \rightarrow 0$, this gives,
\begin{align*}
\mathcal{J}(\delta_{u_{n}})&\rightarrow \mathcal{J}(v_{r}).
\end{align*}
Further, as $\mathcal{J}(u_{n})$ correlates with $\mathcal{J}(\delta_{u_{n}})$, it follows that
\begin{align*}
\mathcal{J}(u_{n})\leq m_{r}.
\end{align*}
But from \eqref{eqn:Relax-mini-ori} it is evident that, $\mathcal{J}(u_{o}):=\inf\{\mathcal{J}(u_{n}),u_{n}\in U_{o}\}$, which leads to the conclusion
\begin{align}
\label{eqn:Relax-comparison-2}
m_{o}\leq m_{r}.
\end{align}
From \eqref{eqn:Relax-comparison-1} and \eqref{eqn:Relax-comparison-2} the required result is proved.\qedhere
\end{enumerate}
\end{proof}
\section{Illustration}
Consider an initial-boundary value problem of parabolic Hilfer fractional control systems with impulsive conditions:
\begin{eqnarray}
\label{Relax-example}
\left\{
  \begin{array}{ll}
    D^{\mu,\frac{4}{5}}x(t,y)=\dfrac{\partial^{2}}{\partial y^{2}}x(t,y)+g(t,x(t,y),u(t)), \enspace y \in [0,\pi], \,\, t\in [0,1]/\left\{\frac{1}{2}\right\},\\
    I_{0+}^{\frac{1}{5}(1-\mu)}[x(t,y)]_{t=0}= x_{0}(y)\in E,\enspace  t \in [0,1],\\
    \Delta I_{0+}^{\frac{1}{5}(1-\mu)}x(\frac{1}{2},y)=\frac{|x(y)|}{2+|x(y)|}, \enspace y \in [0,\pi],\\
    x(t,0)= x(t,\pi)=0, \enspace t\in [0,1].
  \end{array}
\right.
\end{eqnarray}
Suppose the given system \eqref{Relax-example} satisfies the assumptions $H(h)$, $H(g_{r})$ and $H(L)$, along with the conditions on the generator $A$ Here the compact Polish space is considered as the unit interval $[0,1]$.
\begin{enumerate}[(1)]
\item
With the Hilbert space $E=L^{2}(0,\pi)$, define an operator $A:\mathcal{D}(A)\rightarrow E$ by
\begin{align*}
Ax=-\dfrac{\partial^{2}x}{\partial y^{2}}, \enspace \mbox{for} \enspace x \in \mathcal{D}(A),
\end{align*}
with
\begin{align*}
\mathcal{D}(A)=\{x \in E: D^{2}x \in E \enspace \mbox{and}\enspace x(0)=x(\pi)=0\}.
\end{align*}
Then, $A$ is given by
\begin{align*}
Ax=-\sum_{k=1}^{\infty}k^{2}\langle x,\bar{e_{k}}\rangle\bar{e_{k}},
\end{align*}
where $k^{2}$ are the eigenvalues and the corresponding eigenvectors with $\bar{e_{k}}=\sqrt{\dfrac{2}{\pi}}\sin kx$, $k=1,2,\ldots,$ form an orthonormal
basis of $E$. Here $A$ is the infinitesimal generator of a differential semigroup $Q(t)$, $(t>0)$ in $E$ given by
\begin{align*}
Q(t)x=\sum_{k=1}^{\infty}e^{-k^{2}t}\langle x,\bar{e_{k}}\rangle\bar{e_{k}},\enspace t>0.
\end{align*}
From Parseval inequality,
\begin{align*}
\|Q(t)x\|^{2}\leq&\sum_{k=1}^{\infty}e^{-2k^{2}t}\lvert\langle x,\bar{e_{k}}\rangle\rvert^{2}\leq e^{-2at}\|x\|^{2}\leq e^{-at},
\end{align*}
where $a$ is the smallest possible eigenvalue. Finally it gives, $\|Q(t)\|\leq e^{-1} < 1=M$. Hence $Q(t)$ is a compact operator. Put together,
the system \eqref{Relax-example} satisfies the conditions that, $A$ is the infinitesimal generator of a compact semigroup.
\item
The function $g:[0,1]\times L^{2}(0,\pi)\times [0,1]\rightarrow L^{2}(0,\pi)$ exists, such that
\begin{align*}
g(t,x,\eta):=\int_{0}^{\pi}g_{0}(t,y,x(y),\eta)dy.
\end{align*}
where $g_{0}:[0,1]\times (0,\pi)\times \mathbb{R}\times [0,1]$. Let $g_{o}$ satisfy the following assumptions:

\begin{enumerate}[(i)]
\item
$(t,y)\mapsto g_{0}(t,y,x(y),\eta)$ is measurable on $[0,1] \times (0,\pi)$.
\item
$(x,u)\mapsto g_{0}(t,y,x(y),\eta)$ is continuous for fixed $(t,y)\in [0,1] \times (0,\pi)$.
\item
$ g_{0}(t,y,x(y),\eta)$ is globally Lipschitz continuous in $x$.
\item
$\| g_{0}(t,y,x(y),\eta)\|<\psi(t,y), \psi \in L^{2}(0,\pi)\times (0,\pi)$.
\end{enumerate}

Under these assumptions, the given system \eqref{Relax-example} satisfy $H(g_{r})$ of Hypothesis \ref{hypo:Relax-hypo-relax}.
Hence by Theorem \ref{thm:Relax-original-relaxed}, the relaxed given by
\begin{align*}
 D^{\mu,\frac{4}{5}}x(t,y)=\dfrac{\partial^{2}}{\partial y^{2}}x(t,y)+G(t,x(t,y))v(t)
\end{align*}
where $G(t,x(t,y))v=\int_{\Lambda}g(t,x,\eta)v d\eta$ with the initial conditions mentioned above, satisfy  $H(g_{o})$ of
Hypothesis \ref{hypo:Relax-hypo-origi}, thus satisfying another condition for the relaxed theorem.
\item
The impulsive conditions given in the system \eqref{Relax-example} is $$\phi_{k}(t_{k}^{-},x(t,y))=\frac{|x(y)|}{2+|x(y)|}.$$
Determining the value of
 $\phi_{k}(t_{k}^{-},x_{1}(t,y))-\phi_{k}(t_{k}^{-},x_{2}(t,y))$,
concludes that the assumption $H(h)$ of Hypothesis \ref{hypo:Relax-hypo-origi} is satisfied.
\item
The cost functional for the system \eqref{Relax-example} is given by
\begin{align*}
\mathcal{J}(u):=\int_{0}^{1}\mathcal{L}(t,x(t),u(t))dt.
\end{align*}
If the integrand $\mathcal{L}(t,x(t),u(t))$ is defined by
\begin{align*}
\mathcal{L}(t,x(t),u(t)):=\int_{0}^{\pi}\mathcal{L}_{o}\left(\dfrac{}{}t,y,x(y),u(t)\mathfrak{c(t)}+(1-u(t))(1-\mathfrak{(c(t))})\right)dy
\end{align*}
with $\mathfrak{c(t)}$ is a continuous functions such that $0<\mathfrak{c(t)}<1$, $x \in L^{2}(0,\pi)$ and $u \in [0,1]$, then the assumption
$H(L)$ of Hypothesis \ref{hypo:Relax-continuous} is satisfied as the function $\mathcal{L}_{o}:[0,1]\times (0,\pi)\times \mathbb{R}\times \mathbb{R}$
is continuous with respect to $(x,\eta)$, measurable with
respect to  $(t,y)$ and $\mathcal{L}_{o}(t,y,x(y),\eta(t))\leq \psi (t,y)$, for  $(t,y)\in [0,1]\times (0,\pi)$.
\end{enumerate}
Since all the required hypotheses are satisfied by the given system \eqref{Relax-example}, the relaxation theory is verified and thus the proposed theory
is  justified.

\section{Concluding remarks}

This research article analyzes the requirements of the impulsive fractional system of Hilfer fractional order with nonconvex control constraints. It converts it to a revised convexified control system called the `Relaxed system.' The crucial requirements such as the embedding of both the system, approximation of trajectories, and coinciding of the extremals are discussed in detail for the given system, thus emphasizing the relaxation theory.


\begin{thebibliography}{99}

\bibitem{property-1}
N. U. Ahmed, Properties of relaxed trajectories for a class of nonlinear evolution equations on a Banach space,
SIAM J. Control Optim. {\bf 21} (1983), no.~6, 953--967.


\bibitem{Polish}
V. S. Borkar, {\it Probability theory}, Universitext, Springer-Verlag, New York, 1995.

\bibitem{property-Q}
L. Cesari, Existence theorems for weak and usual optimal solutions in Lagrange problems with unilateral constraints. II.
 Existence theorems for weak solutions, Trans. Amer. Math. Soc. {\bf 124} (1966), 413--430.

\bibitem{Hilfer-impulsive-inclusion}
A. Debbouche\ and\ V. Antonov, Approximate controllability of semilinear Hilfer fractional differential inclusions with impulsive control
inclusion conditions in Banach spaces,
Chaos Solitons Fractals {\bf 102} (2017), 140--148.

\bibitem{frac-nonlocal}
A. Debbouche\ and\ J. J. Nieto, Relaxation in controlled systems described by fractional integro-differential
equations with nonlocal control conditions, Electron. J. Differential Equations {\bf 2015}, No. 89, 18 pp.

\bibitem{frac-multiple}
A. Debbouche, J. J. Nieto\ and\ D. F. M. Torres, Optimal solutions to relaxation in multiple control problems of
Sobolev type with nonlocal nonlinear fractional differential equations, J. Optim. Theory Appl. {\bf 174} (2017), no.~1, 7--31.

\bibitem{Hilfer-impulsive}
J. Du, W. Jiang\ and\ A. U. K. Niazi, Approximate controllability of impulsive Hilfer fractional differential inclusions,
J. Nonlinear Sci. Appl. {\bf 10} (2017), no.~2, 595--611.

\bibitem{relax-ref-book}
H. O. Fattorini, {\it Infinite-dimensional optimization and control theory},
Encyclopedia of Mathematics and its Applications, 62, Cambridge University Press, Cambridge, 1999.

\bibitem{Hilfer-exist-1}
K. M. Furati, M. D. Kassim\ and\ N. Tatar, Existence and uniqueness for a problem involving Hilfer fractional derivative,
Comput. Math. Appl. {\bf 64} (2012), no.~6, 1616--1626.

\bibitem{Hilfer-remark}
H. Gu\ and\ J. J. Trujillo, Existence of mild solution for evolution equation with Hilfer fractional derivative,
Appl. Math. Comput. {\bf 257} (2015), 344--354.

\bibitem{Hilfer-glass}
R. Hilfer, Fractional time evolution, in {\it Applications of fractional calculus in physics}, (2000), 87--130, World Sci. Publ., River Edge, NJ.

\bibitem{Hilfer-Impulsive-optimal}
A. Harrat, J. J. Nieto\ and\ A. Debbouche, Solvability and optimal controls of impulsive Hilfer fractional delay evolution inclusions with
Clarke subdifferential,
J. Comput. Appl. Math. {\bf 344} (2018), 725--737.

\bibitem{Hilfer-Sousa-3}
K. D. Kucche, J. P. Kharade\ and\ J. V. da C. Sousa, On the nonlinear impulsive $\Psi$-Hilfer fractional differential equations, Math. Model. Anal. {\bf 25} (2020), no.~4, 642--660.

\bibitem{existence-1}
E. B. Lee\ and\ L. Markus, Optimal control for nonlinear processes, Arch. Rational Mech. Anal. {\bf 8} (1961), 36--58.

\bibitem{frac-semi}
X. Liu\ and\ X. Fu, Relaxation in control systems of fractional semilinear evolution equations,
Electron. J. Differential Equations {\bf 2014}, No. 26, 16 pp.

\bibitem{frac-nonconvex}
X. Liu, Z. Liu\ and\ X. Fu, Relaxation in nonconvex optimal control problems described by fractional differential equations,
J. Math. Anal. Appl. {\bf 409} (2014), no.~1, 446--458.

\bibitem{relax-2}
L. W. Neustadt, The existence of optimal controls in the absence of convexity conditions, J. Math. Anal. Appl. {\bf 7} (1963), 110--117.

\bibitem{property-2}
N. S. Papageorgiou, Properties of the relaxed trajectories of evolution equations and optimal control,
SIAM J. Control Optim. {\bf 27} (1989), no.~2, 267--288.

\bibitem{recent}
N. S. Papageorgiou, Vicentiu D. Radulescu, Dusan D. Repovs, Relaxation
Methods for Optimal Control Problems, Bulletin of Mathematical Sciences. {\bf 10} (2020), no.~1, 24 pp.

\bibitem{relax-ref}
G. Pappas, A note on relaxed control functions, J. Math. Anal. Appl. {\bf 114} (1986), no.~2, 429--432.

\bibitem{Podlubny-book}
I. Podlubny, {\it Fractional differential equations}, Mathematics in Science and Engineering, 198, Academic Press, Inc., San Diego, CA, 1999.

\bibitem{base-1}
P. Pongchalee, P. Sattayatham\ and\ X. Xiang, Relaxation of nonlinear impulsive controlled systems on Banach spaces,
Nonlinear Anal. {\bf 68} (2008), no.~6, 1570--1580.

\bibitem{existence-2}
E. Roxin, The existence of optimal controls, Michigan Math. J. {\bf 9} (1962), 109--119.

\bibitem{Hilfer-non-instant}
J. V. C. Sousa, F. Jarad\ and\ T. Abdeljawad, Existence of mild solutions to Hilfer fractional evolution equations in Banach space, Ann. Funct. Anal. {\bf 12} (2021), no.~1, Paper No. 12, 16 pp.

\bibitem{Hilfer-Sousa-2}
J. V. C. Sousa, D. S. Oliveira\ and\ E. Capelas de Oliveira, A note on the mild solutions of Hilfer impulsive fractional differential equations, Chaos Solitons Fractals {\bf 147} (2021), Paper No. 110944, 13 pp.

\bibitem{base-2}
J. Wang, M. Fe\v{c}kan\ and\ Y. Zhou, Relaxed controls for nonlinear fractional impulsive evolution equations,
J. Optim. Theory Appl. {\bf 156} (2013), no.~1, 13--32.

\bibitem{relax-1}
J. Warga, Relaxed variational problems, J. Math. Anal. Appl. {\bf 4} (1962), 111--128.

\bibitem{Warga-book}
J. Warga, {\it Optimal control of differential and functional equations}, Academic Press, New York, 1972.

\bibitem{delay}
X. Xiang, P. Sattayatham\ and\ W. Wei, Relaxed controls for a class of strongly nonlinear delay evolution equations,
Nonlinear Anal. {\bf 52} (2003), no.~3, 703--723.

\bibitem{Gronwall-Inequality}
H. Ye, J. Gao\ and\ Y. Ding, A generalized Gronwall inequality and its application to a fractional differential equation,
J. Math. Anal. Appl. {\bf 328} (2007), no.~2, 1075--1081.

\end{thebibliography}
\end{document}